\newtheorem{theorem}{Theorem}
\newtheorem{proposition}[theorem]{Proposition}
\newtheorem{lemma}[theorem]{Lemma}
\newtheorem{conjecture}[theorem]{Conjecture}
\newtheorem{claim}{\emph{Claim}}
\newtheorem{obs}[claim]{\emph{Observation}}
\newenvironment{proof}{{\noindent \textbf{\textit{Proof.}}}}
{\hfill $\Box$\vspace*{0.1in}}
\newenvironment{proofc}{{\noindent \textbf{\textit{Proof of Claim.}}}}
{\hfill $\Box$\vspace*{0.1in}}
\newenvironment{proofcn}[1]{{\noindent \textbf{\textit{Proof of Claim \ref{#1}.}}}}
{\hfill $\Box$\vspace*{0.1in}}
\newcounter{tmptheorem}
\newcommand{\plus}{{\scriptscriptstyle+}}
\newcommand{\minus}{{\scriptscriptstyle-}}
\long\def\symbolfootnote[#1]#2{\begingroup\def\thefootnote{\fnsymbol{footnote}}
\footnote[#1]{#2}\endgroup}
\begin{document}

\title{Kempe equivalence of edge-colourings in subcubic and subquartic graphs}
\author{
Jessica McDonald\footnote{jessica\textunderscore mcdonald@sfu.ca (corresponding author)}\\
Bojan Mohar \footnote{mohar@sfu.ca. Supported in part by an NSERC Discovery Grant (Canada), by the Canada Research Chair program, and by the Research Grant P1--0297 of ARRS (Slovenia). On leave from: IMFM \& FMF, Department of Mathematics, University of Ljubljana, Ljubljana, Slovenia.}\\
Diego Scheide \footnote{dscheide@sfu.ca}\\
\medskip\\
Department of Mathematics\\
Simon Fraser University\\
Burnaby, B.C., Canada V5A 1S6
}
\date{May 12, 2010}

\maketitle

\bigskip

\begin{abstract}
It is proved that all 4-edge-colourings of a (sub)cubic graph are Kempe equivalent. This resolves a conjecture of the second author. In fact, it is found that the maximum degree $\Delta=3$ is a threshold for Kempe equivalence of $(\Delta+1)$-edge-colourings, as such an equivalence does not hold in general when $\Delta=4$. One extra colour allows a similar result in this latter case however, namely, when $\Delta\leq 4$ it is shown that all $(\Delta+2)$-edge-colourings are Kempe equivalent.
\end{abstract}



\begin{section}{Introduction}

Let $\phi$ be a \emph{$k$-colouring} of a graph $G$, that is, an assignment of the colours $\{1,\ldots, k\}$ to the vertices of $G$ such that adjacent vertices receive different colours. Any pair of colours $a,b \in \{1,\ldots, k\}$ induces a subgraph $G(a,b)$ of $G$, and note that switching $a$ and $b$ on a connected component of $G(a,b)$ results in another $k$-colouring of $G$. We call any such switch a \emph{Kempe change} (or \emph{$K$-change}).  If a $k$-colouring $\psi$ can be obtained from $\phi$ by a sequence of $K$-changes, then we say that $\phi$ and $\psi$ are \emph{Kempe equivalent}, and we write $\phi \sim_k \psi$.  If two colourings differ only by a permutation of colours then they are clearly Kempe equivalent. Therefore, when we are interested in Kempe equivalence, we may as well consider colourings that differ only by a permutation of colours to be the same. In general, we use $\kappa(G, k)$ to denote the number of Kempe equivalence classes of $k$-colourings of $G$. It is worth noting that there exist graphs such $\kappa(G,k)=1$ but $\kappa(G, k+1)>1$ (see \cite{Mo}).

Kempe changes have been a widely used technique in graph colouring theory ever since having been introduced by Kempe in his erroneous proof of the four colour theorem. In recent years however, there has been increased interest in the parameter $\kappa(G,k)$ due to direct applications in approximate counting and applications in statistical physics. In particular, the dynamics of the Wang-Swendsen-Koteck\'{y} algorithm (a Monte Carlo algorithm for the antiferromagnetic $k$-state Potts model) is ergodic at zero-temperature if and only if $\kappa(G, k)=1$ for a corresponding graph $G$  (see eg. \cite{MS}, \cite{WSK}). There has been substantial work on this problem (see eg. \cite{BH}, \cite{FS}), but much remains to be known in the case where the graph in question is a line graph. Colourings of a line graph $L(G)$ correspond to edge-colourings of the graph $G$, and Kempe changes correspond to swapping a pair of colours along either a maximal alternating path or alternating cycle of $G$. Let $\kappa_E(G, k):=\kappa(L(G), k)$, that is, let $\kappa_E(G, k)$ denote the number of Kempe equivalence classes of $k$-edge-colourings of $G$. In \cite{Mo}, the second author proved the following result about Kempe equivalence of edge-colourings. This result involves \emph{chromatic index} $\chi'(G)$, the minimum number of colours needed to edge-colour a graph $G$.

\begin{theorem}\label{thm:mohar}\emph{\cite{Mo}}
  Let $G$ be a graph with maximum degree $\Delta$. If $k\geq \chi'(G)+2$ is an integer, then $\kappa_E(G, k)=1$.
\end{theorem}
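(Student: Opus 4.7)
The plan is to proceed by induction on $|E(G)|$, with the trivial base case $|E(G)|=0$. For the inductive step, let $\phi$ and $\psi$ be $k$-edge-colourings of $G$. If $\phi=\psi$ there is nothing to prove, so pick an edge $e$ with $\phi(e)\neq \psi(e)$. The strategy is: first use a single Kempe change in $G$ to produce $\phi' \sim_k \phi$ with $\phi'(e)=\psi(e)$; then apply the inductive hypothesis to $G-e$; finally, lift the resulting Kempe sequence back to $G$.

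For the alignment step, set $a=\phi(e)$ and $b=\psi(e)$. The edge $e$ lies in $G(a,b)$, whose component containing $e$ is a single path or cycle; swapping $a$ and $b$ on this component yields a colouring $\phi'$ with $\phi'(e)=b=\psi(e)$ and $\phi'\sim_k\phi$. Next, since $\chi'(G-e)\leq \chi'(G)\leq k-2$ and $G-e$ has fewer edges, the inductive hypothesis supplies a Kempe sequence $\sigma$ in $G-e$ transforming $\phi'|_{G-e}$ into $\psi|_{G-e}$. If every step of $\sigma$ can be executed in $G$ without disturbing the colour on $e$, we conclude $\phi'\sim_k\psi$, and hence $\phi\sim_k\psi$.

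The main obstacle is therefore the lifting of $\sigma$ to $G$. A Kempe step of $\sigma$ using colours $\{c_1,c_2\}$ lifts directly whenever $b\notin\{c_1,c_2\}$, since then $e\notin G(c_1,c_2)$ and the components of $G(c_1,c_2)$ coincide with those of $(G-e)(c_1,c_2)$. The delicate case is $b\in\{c_1,c_2\}$: the component of $(G-e)(c_1,c_2)$ that $\sigma$ swaps may, upon reinstating $e$, merge with the component of $G(c_1,c_2)$ containing $e$, so a naive swap would also recolour $e$. To handle this I would invoke the hypothesis $k\geq\chi'(G)+2\geq\Delta+2$, which guarantees that each endpoint of $e$ misses at least two colours: the idea is to first recolour $e$ by a short Kempe detour to some third colour outside $\{c_1,c_2\}$, then perform the original $(c_1,c_2)$-swap (which now does not touch $e$), and finally undo the detour to restore the colour of $e$. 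The technical heart of the argument will be verifying that such a detour can always be arranged at each problematic step, and that the auxiliary swaps can be organized so as not to cascade and interfere with the remaining steps of $\sigma$; this is where I expect the proof to become genuinely intricate.
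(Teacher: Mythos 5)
A first remark: this paper does not prove Theorem~\ref{thm:mohar} at all --- it is quoted from \cite{Mo} --- so there is no in-paper proof to compare against. The proof in \cite{Mo} takes a different route from yours: it first reduces any $k$-edge-colouring by Kempe changes to one using at most $\Delta+1\le\chi'(G)+1\le k-1$ colours (this is the content of Lemma~\ref{lem:deltaplusone} here, extracted from Vizing's argument), so that some colour is entirely unused, and then exploits that globally free colour to move between colourings. Your proposal instead deletes an edge and tries to lift a Kempe sequence from $G-e$ back to $G$.

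The proposal as written has a genuine gap exactly where you flag it, and the difficulty is not merely ``intricate'' --- the specific detour you describe can fail. If a step of $\sigma$ swaps colours $\{c_1,c_2\}$ with $\psi(e)=b=c_1$, you propose to park $e$ on a third colour $c_3$ and restore it afterwards. But the two endpoints of $e$ need not share any missing colour: with $\Delta=4$, $k=6$ and $e=xy$ coloured $1$, the colours at $x$ may be $\{1,2,3,4\}$ and at $y$ may be $\{1,2,5,6\}$, so $x$ misses only $\{5,6\}$ and $y$ misses only $\{3,4\}$. Hence the ``detour'' cannot be a single recolouring of $e$; it must be a genuine Kempe swap along a $(b,c_3)$-chain through $e$, which recolours other edges of $G-e$. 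Those edges are precisely the data on which the remaining steps of $\sigma$ were computed, and worse, after the intervening $(c_1,c_2)$-swap the $(b,c_3)$-components have changed, so ``undoing the detour'' by another $(b,c_3)$-swap need not restore the disturbed edges. So the lifting argument, which is the entire content of the theorem in this approach, is missing; compare Lemma~\ref{lem:3nonregular} and Theorem~\ref{thm:multigraphs} in the paper, where the analogous lifting succeeds only because the deleted object is attached at a vertex of low degree (or a multiple edge), guaranteeing a common missing colour --- a guarantee you do not have for an arbitrary edge between two vertices of maximum degree. To repair the argument you would need either the unused-colour reduction of Lemma~\ref{lem:deltaplusone}, or a direct analysis of how the $(c_1,c_2)$-component through $e$ splits in $G-e$, rather than a park-and-restore detour.
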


Throughout this paper we shall use $\Delta$ to denote the maximum degree of the graph in question. Vizing's Theorem \cite{Vi} says that for any graph $G$, $\chi'(G)$ is equal to either $\Delta$ or $\Delta+1$, and we often label graphs as \emph{class 1} in the former case, and \emph{class 2} in the latter. For class 1 graphs, Theorem \ref{thm:mohar} tells us that $\kappa_E(G, \Delta+2)=1$, while it only says that $\kappa_E(G, \Delta+3)=1$ for class 2 graphs.  One natural question is then to ask whether or not  $\kappa_E(G, \Delta+2)=1$ for all graphs $G$. This question remains open, and no conjecture has been made in either direction, although we do provide the following positive result for subquartic graphs.

\newcounter{thmsubquartic}
\setcounter{thmsubquartic}{\value{theorem}}
\begin{theorem}\label{thm:subquartic}
  If $G$ is a graph with maximum degree $\Delta\leq 4$, then
$\kappa_E(G, \Delta+2)=1$.
\end{theorem}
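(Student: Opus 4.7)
The first move is to split on the Vizing class of $G$. If $G$ is class~1, so $\chi'(G)=\Delta$, then $\Delta+2\ge\chi'(G)+2$ and Theorem~\ref{thm:mohar} already gives $\kappa_E(G,\Delta+2)=1$. So I would assume throughout that $G$ is class~2, i.e.\ $\chi'(G)=\Delta+1$, and the task becomes to establish $\kappa_E(G,\chi'(G)+1)=1$ for $\Delta\le 4$.

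The next step is to reduce an arbitrary $(\Delta+2)$-edge-colouring of $G$ to one that avoids the ``extra'' colour $\Delta+2$. Concretely, I would prove by induction on the number of edges coloured $\Delta+2$ that every $(\Delta+2)$-edge-colouring is Kempe-equivalent to some $(\Delta+1)$-edge-colouring. If $e=uv$ has colour $\Delta+2$, then since $\deg u,\deg v\le\Delta$, at each endpoint at least two colours from $\{1,\dots,\Delta+1\}$ are missing. A Vizing-fan style argument on these free colours, combined with an $(a,\Delta+2)$-Kempe-chain swap for a suitably chosen $a$ missing at $u$ or $v$, should strictly decrease the number of edges of colour $\Delta+2$. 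The abundance of missing colours at each endpoint is precisely what makes the fan terminate, and is the feature of $\Delta\le 4$ that the argument exploits.

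What remains is to show that any two $(\Delta+1)$-edge-colourings of $G$ are Kempe-equivalent in $\Delta+2$ colours. For $\Delta=3$ this is immediate from the main theorem of the paper, namely $\kappa_E(G,4)=1$ for subcubic $G$: Kempe-equivalence in $4$ colours trivially upgrades to Kempe-equivalence in $5$. For $\Delta=4$ the abstract explicitly warns that no analogous $(\Delta+1)$-colour statement holds, so the argument must genuinely use the sixth colour as a reservoir. The plan is to compare two $5$-edge-colourings $\phi$ and $\psi$ and, processing $E(G)$ in a carefully chosen order, perform $(a,6)$- or $(a,b)$-Kempe swaps localised near each edge to align $\phi$ with $\psi$ on that edge without disturbing already-matched edges.

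The main obstacle is precisely this last step. At a vertex of degree $4$ in a $6$-edge-colouring exactly two colours are free, so each fan swap is tight and one must verify that the Kempe chain does not close up badly or undo earlier progress. I expect the heavy lifting to come from a detailed case analysis of short alternating paths, of degree-$4$ vertices, and of repeated fans, together with a judicious choice of ordering on $E(G)$; the induction will then close and yield $\kappa_E(G,\Delta+2)=1$ in every subquartic case.
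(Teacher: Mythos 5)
There is a genuine gap at the crux of the argument. Your reductions are fine as far as they go: the class-1 case via Theorem~\ref{thm:mohar} is correct (though the paper does not need this split), and the reduction of an arbitrary $(\Delta+2)$-edge-colouring to a Kempe-equivalent $(\Delta+1)$-edge-colouring is exactly Lemma~\ref{lem:deltaplusone}, which the paper imports from known proofs of Vizing's theorem. But the entire difficulty of the theorem lives in your final step --- showing that two $5$-edge-colourings of a subquartic graph are Kempe equivalent within $6$ colours --- and there you offer only a hope (``processing $E(G)$ in a carefully chosen order, perform swaps localised near each edge'') rather than an argument. Greedy edge-by-edge alignment is precisely what fails for regular graphs: at a degree-$4$ vertex in a $6$-edge-colouring only two colours are free, a Kempe chain is a global object that can undo already-matched edges, and nothing in your sketch explains why the process terminates or why the set of disagreements ever becomes tractable. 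Lemma~\ref{lem:path} handles disagreements forming disjoint paths, but you give no mechanism for reaching that situation.

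The idea you are missing is how the sixth colour is actually spent. The paper takes a maximum matching $M$ of $G$, recolours every edge of $M$ with the fresh colour $6$ in both colourings (each such recolouring is a Kempe change), and then works in $G'=G\setminus M$ with the remaining five colours. Maximality of $M$ guarantees that $G'$ has no two adjacent vertices of degree $4$, and (after an exchange argument on $M$ to merge components) every component of $G'$ contains a vertex of degree at most $2$, or of degree $3$ with few degree-$4$ neighbours. That is exactly the hypothesis of Lemma~\ref{lem:4nonregular}, an inductive vertex-deletion lemma which does the local case analysis you were anticipating --- but only because the deleted vertex has spare colours, which the matching trick manufactures. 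Without some structural device of this kind that breaks regularity, your plan for the $\Delta=4$ case does not close.
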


Our main work here is actually about $(\Delta+1)$-edge-colourings in the special case when $\Delta \leq 3$. While class 1 graphs have $\Delta$-edge-colourings, it is known that $\kappa_E(G, \Delta)>1$ for many class 1 graphs, including $K_{p,p}$ when $p$ is prime. However, in \cite{Mo}, the second author wondered if it would be possible to extend Theorem \ref{thm:mohar} so far as to prove $\kappa_E(G, \Delta+1)=1$ for all graphs, and conjectured that this should be possible if $G$ is subcubic. In this paper we confirm this conjecture.

\newcounter{thmsubcubic}
\setcounter{thmsubcubic}{\value{theorem}}
\begin{theorem}\label{thm:subcubic}
  If $G$ is a graph with maximum degree $\Delta\leq 3$, then all $(\Delta+1)$-edge-colourings of $G$ are Kempe equivalent.
\end{theorem}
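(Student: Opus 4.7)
I would proceed by induction on $|V(G)| + |E(G)|$, with the base cases ($|V(G)| \leq 4$) handled by direct verification. Since Kempe changes act independently on connected components, I may assume $G$ is connected. When $G$ is subcubic but not cubic, any vertex $v$ of degree $d \leq 2$ has at least $4-d \geq 2$ colours absent at it, giving local flexibility: I would delete an incident edge $e$, apply the inductive hypothesis to $G - e$, and lift any resulting Kempe equivalence back to $G$, using the slack at $v$ to correct the colour of $e$ via a short Kempe chain through $v$. The crux is therefore the case in which $G$ is connected and cubic.

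For connected cubic $G$, my goal would be a canonical-form result: every $4$-edge-colouring $\phi$ is Kempe equivalent to some fixed colouring $\phi_0$ depending only on $G$, so that the theorem follows by transitivity. I would fix an edge $e = uv$ and first establish a local lemma saying that $\phi$ can be transformed by Kempe chains between carefully chosen colour pairs so that $\phi(e) = \phi_0(e)$. The key structural fact enabling this is that in a cubic graph each two-colour subgraph $G(a,b)$ is a disjoint union of paths and even cycles, so the Kempe chains are well controlled, and each edge lies in exactly three such chains (one per colour pair containing its colour). Having forced agreement on $e$, I would try to ``freeze'' $e$ and reduce to a smaller instance, for example $G - e$ (where $u$ and $v$ now have degree $2$), and apply induction; lifting Kempe equivalences from $G - e$ back to $G$ without disturbing $e$ is delicate, since chains passing through $u$ or $v$ in $G-e$ need not correspond to chains in $G$.

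The main obstacle is the case of cubic class~$2$ graphs, i.e.\ snarks such as the Petersen graph: these admit no proper $3$-edge-colouring, so colour~$4$ is forced to appear in every $4$-edge-colouring, and different colourings may use colour~$4$ on structurally inequivalent matchings. Connecting such ``rigid'' colourings by Kempe changes---without the luxury of eliminating any one colour and reducing to Theorem~\ref{thm:mohar} applied to $\chi'(G)=3$---requires a careful structural argument, probably involving local moves around short cycles or Vizing-fan-like sequences, together with a detailed analysis of how the colour-$4$ matching can be shifted across the graph. A secondary difficulty is that edge deletions during induction may create multi-edges or disconnect the graph; handling this may require replacing plain edge deletion by a more clever operation (such as deleting a pair of edges at a common vertex, or contracting after a suitable recolouring) to keep the inductive hypothesis applicable.
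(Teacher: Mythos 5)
Your reduction to the connected cubic case is sound and parallels the paper's (the paper deletes a low-degree vertex rather than an edge in its Lemma \ref{lem:3nonregular}, but the lifting argument is the same in spirit, using the two colours missing at the low-degree vertex to break any alternating path that would pass through it). However, the cubic case --- which is the entire substance of the theorem --- is not proved in your proposal; it is only described as requiring ``a careful structural argument, probably involving local moves around short cycles or Vizing-fan-like sequences.'' That is an accurate diagnosis of where the difficulty lies, but it is not an argument. Moreover, the specific strategy you sketch (force agreement on one edge $e$, freeze it, and induct on $G-e$) runs into exactly the obstruction you yourself flag: once $e$ is deleted, the inductive hypothesis gives a sequence of Kempe changes in $G-e$, but a maximal alternating path in $G-e$ ending at $u$ or $v$ may continue through $e$ in $G$, so performing the corresponding switch in $G$ either fails to be a Kempe change or recolours $e$ and destroys the agreement you just established. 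With only one edge removed, $u$ and $v$ each have only a single missing colour available to repair this, and there is no obvious way to guarantee a repair; no mechanism for this is supplied.

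For comparison, the paper's cubic argument removes not a single edge but the edge set of a shortest (hence chordless) cycle $C$, so that every component of $G-E(C)$ has a vertex of degree at most $2$ and the non-regular lemma applies. It then inducts on the number $t$ of Kempe changes needed to align the two colourings off $C$. The base case $t=0$ (colourings agreeing everywhere except on $C$) is handled by a delicate analysis of ``difference'' and ``balanced'' vertices on $C$ and simultaneous ``double switches'' along paths $v_\circ v v_{\plus}$ and $v_\circ v v_{\minus}$; the inductive step shows how to simulate a single Kempe change of $G-E(C)$ inside $G$ by a sequence of exchanges confined to segments of $C$. None of this machinery, nor any substitute for it, appears in your proposal, so the proof has a genuine gap precisely at the case that distinguishes this theorem from the easy ones.
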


In fact, we find that $\Delta=3$ is a threshold for Kempe equivalence of $(\Delta+1)$-edge-colourings, as the following example demonstrates.

\begin{proposition}\label{prop:k5}
  $\kappa_E(K_5, 5)=6$.
\end{proposition}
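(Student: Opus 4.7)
The plan is as follows. First, I would establish that in any $5$-edge-colouring of $K_5$, each colour class is a matching of size exactly $2$ (since $|E(K_5)|=10$ and no matching in $K_5$ has more than $2$ edges), so each colour $c$ has a uniquely determined missed vertex $x_c$, and the map $c \mapsto x_c$ is a bijection between the five colours and the five vertices. For any two colours $a, b$, the subgraph $G(a,b)$ is the edge-disjoint union of the two size-$2$ matchings $M_a$ and $M_b$, and a short case analysis yields a unique non-trivial component: a $4$-cycle on $V(K_5)\setminus\{v\}$ when $x_a = x_b = v$, or a $4$-path from $x_b$ to $x_a$ when $x_a \neq x_b$ (the alternative, an edge $x_a x_b$ together with a triangle on the other three vertices, is ruled out because a triangle has no proper $2$-edge-colouring).

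The crucial observation is that the Kempe change on this unique component acts as a pure transposition of colour labels: after the swap, the matching that was $M_a$ becomes $M_b$ and vice versa. In the $4$-cycle case this is immediate; in the $4$-path case it follows from checking the alternation pattern of the path. Hence every Kempe change is equivalent to the colour permutation swapping $a$ and $b$. Since colourings differing only by a permutation of colours are identified, every Kempe change preserves the equivalence class, and the Kempe equivalence classes of $5$-edge-colourings of $K_5$ are exactly the unordered partitions of $E(K_5)$ into five size-$2$ matchings.

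It remains to count these partitions. Using the natural labelling of each matching by the vertex it misses, a partition corresponds to a sequence $(M_1, \ldots, M_5)$ where $M_i$ is a perfect matching of $K_4$ on $V(K_5)\setminus\{i\}$ and $\bigcup_i M_i = E(K_5)$. Fixing the three choices for $M_1$ and then propagating the (essentially forced) compatible choices for $M_2, M_3, M_4, M_5$ produces exactly two completions per choice of $M_1$, giving $3 \times 2 = 6$ partitions in total. The main conceptual step is the structural reduction---recognising that Kempe changes collapse to colour transpositions---while the enumeration is a short and mechanical verification.
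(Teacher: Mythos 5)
Your proposal is correct and follows essentially the same route as the paper: each colour class is a size-two matching missing a distinct vertex, the union of any two colour classes is a Hamilton path so every Kempe change amounts to a transposition of colour labels, and the count of colourings up to permutation is $3\times 2=6$. (The only cosmetic difference is that your $4$-cycle case is vacuous, since, as you yourself note, the map from colours to missed vertices is a bijection, so $x_a\neq x_b$ for distinct colours.)
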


\begin{proof}
Since the maximum size of a colour class in $K_5$ is two, and since $|E(K_5)|=10$, every 5-edge-colouring of $K_5$ has exactly five colour classes of size two. Hence, given any such colouring $\phi$, each colour of $\phi$ is missing at exactly one vertex. Given any pair of colours in $\phi$, the graph induced by these colours has four edges, and so must be a Hamilton path of $K_5$. Consequently, there are no non-trivial Kempe exchanges that can be performed on $\phi$. Hence, $\kappa_E(K_5, 5)$ is equal to the number of different 5-edge-colourings of $K_5$ (up to permutation of colour classes).

To count the number of distinct 5-edge-colourings of $K_5$, label the vertices $v_1, \ldots, v_5$. Consider the colour class containing the edge $v_1v_2$, say colour 1. There are three choices for the second edge of colour 1: it could appear on $v_3 v_4$, $v_4 v_5$, or $v_3 v_5$. Suppose first that it appears on $v_3 v_4$. Then all other colours, say 2, 3, 4, 5, are incident to $v_5$. Without loss of generality, we may assume that $v_1 v_5, v_2 v_5, v_3 v_5$ and $v_4 v_5$ are coloured 2, 3, 4, 5, respectively. Then the edge $v_2 v_3$ receives either colour 2 or colour 5 (see Figure \ref{fig:k5}). Each choice forces the rest of the colouring. So, there are exactly two different 5-edge-colourings of $K_5$ formed in this way. Our argument is completely symmetric for the other two possible choices of colour class 1. Hence, $\kappa_E(K_5, 5)=6$.
\end{proof}

\begin{figure}[htbp]
  \centering
  \includegraphics[height=4cm]{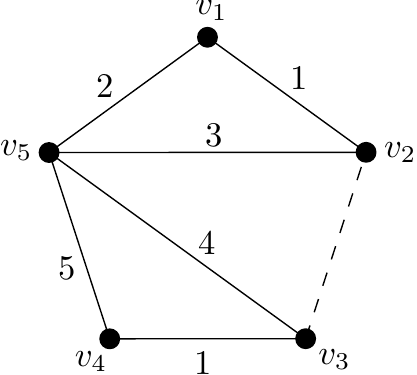}
  \caption{The edge $v_2 v_3$ receives either colour 2 or colour 5}
  \label{fig:k5}
\end{figure}

The above example can be generalized to an infinite family of graphs with $K_E(G, \Delta+1)>1$, namely the set of all graphs $K_{2p-1}$ where $p\geq 3$ is a prime number. This fact is related to the following conjecture of Kotzig, known as the Perfect 1-Factorization Conjecture.

\begin{conjecture}\label{conj:kotzig}\emph{\cite{Ko}}
  For any $n \geq 2$, $K_{2n}$ can be decomposed into $2n-1$ perfect matchings such that the union of any two of these matchings forms a hamiltonian cycle of $K_{2n}$.
\end{conjecture}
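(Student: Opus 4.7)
The plan is to attack Kotzig's Perfect 1-Factorization Conjecture via a \emph{starter} construction in a cyclic group, which is the standard algebraic framework in which every known infinite family of perfect 1-factorizations has been built. The setup is to take the vertex set of $K_{2n}$ to be $\mathbb{Z}_{2n-1}\cup\{\infty\}$ and to exhibit a single 1-factor
$F_{0}=\{\{\infty,0\}\}\cup\bigl\{\{a_{j},b_{j}\}:1\le j\le n-1\bigr\}$
whose $n-1$ differences $a_{j}-b_{j}$, together with their negatives, exhaust $\mathbb{Z}_{2n-1}\setminus\{0\}$. Defining $F_{i}$ as the translate of $F_{0}$ by $i$ on the finite vertices, the starter property automatically makes $\{F_{0},\ldots,F_{2n-2}\}$ a 1-factorization of $K_{2n}$, so the conjecture reduces to producing a starter for which each union $F_{0}\cup F_{i}$ with $i\neq 0$ is a single Hamilton cycle.

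Next I would reformulate the Hamiltonicity condition algebraically. The union $F_{0}\cup F_{i}$ is $2$-regular and hence a disjoint union of even cycles; letting $\sigma_{0}$ and $\sigma_{i}$ be the fixed-point-free involutions of $\mathbb{Z}_{2n-1}\cup\{\infty\}$ whose cycles are the edges of $F_{0}$ and $F_{i}$, the union is a single Hamilton cycle precisely when $\langle\sigma_{0},\sigma_{i}\rangle$ acts transitively on the $2n$ vertices. This rewrites the conjecture as the purely arithmetic task of selecting a starter so that the $2n-2$ pairs of involutions $(\sigma_{0},\sigma_{i})$ all generate transitive dihedral subgroups. When $2n-1$ is prime the patterned starter $\{j,-j\}_{j=1}^{n-1}$ collapses this condition to a multiplicative-order statement in $\mathbb{Z}_{2n-1}^{*}$, whose cyclicity finishes the proof (Anderson); a dual construction of Nakamura handles the case where $2n+1$ is prime.

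The main obstacle, and the reason this conjecture has remained open since 1964, is that no uniform starter is known once both $2n-1$ and $2n+1$ are composite. My plan would be to interpolate between the two prime constructions by building starters from the prime factorizations of $2n\pm 1$---for instance by amalgamating two smaller perfect 1-factorizations through a product construction on Cayley graphs of cyclic groups, or by a character-sum argument that forces the transitivity of $\langle\sigma_{0},\sigma_{i}\rangle$ generically. I fully expect this interpolation to be the essential unresolved difficulty: the smallest open instances have historically been settled by ad hoc arguments or computer search rather than by a general theory, and I would anticipate that any genuine resolution of Conjecture \ref{conj:kotzig} lies well beyond the elementary Kempe-exchange methods deployed in the rest of this paper.
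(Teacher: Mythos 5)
There is a genuine gap, and it is one you yourself flag: the final paragraph of your proposal is not an argument but a research programme. The statement you were asked to prove is Kotzig's Perfect 1-Factorization Conjecture, which the paper presents precisely as a \emph{conjecture} attributed to \cite{Ko} and explicitly describes as open in general; the paper offers no proof of it and only uses the known special case $n=p$ prime (due to Kotzig) to build examples with $\kappa_E(K_{2p-1},2p-1)>1$. Your starter framework and the reduction of Hamiltonicity of $F_0\cup F_i$ to transitivity of $\langle\sigma_0,\sigma_i\rangle$ are a correct description of the standard machinery, and the two prime families ($2n-1$ prime via the patterned starter, $2n+1$ prime via the dual construction) are indeed the known infinite families. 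But the step ``interpolate between the two prime constructions by building starters from the prime factorizations of $2n\pm1$'' is where the proof would have to happen, and nothing is supplied there: no starter is exhibited, no product construction is defined, and no character-sum estimate is stated, let alone proved. When both $2n-1$ and $2n+1$ are composite (the first such case being $2n=36$, i.e.\ $n=18$ with $35=5\cdot7$ and $37$ prime is fine, but e.g.\ $2n=50$ gives $49$ and $51$ both composite), no general construction is known, and translation-invariant starters are not even guaranteed to exist with the perfect property. So the proposal establishes at most the two classical prime cases and leaves the conjecture exactly as open as the paper says it is; it should not be presented as a proof of the stated conjecture.
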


This conjecture remains open in general, but Kotzig himself proved it to be true when $n=p$ is prime \cite{Ko}. In this case, by deleting one vertex, we obtain a $(2p-1)$-edge-colouring of $K_{2p-1}$ where the union of any two colour classes forms a Hamilton path, and hence there are no nontrivial Kempe changes. When $p\geq 3$, $K_{2p-1}$  has  a pair of colourings which are different (i.e.\ not a permutation of one another) because $K_5\subseteq K_{2p-1}$, hence we get $\kappa_E(K_{2p-1}, 2p-1)>1$. Note that if the perfect 1-factorization conjecture were proved for all $n$, then we would have an example of a graph $G$ with $\kappa_E(G, \Delta+1)>1$ for all odd values of $\Delta$. As it is, the values $\Delta=2p-2$ for all primes $p\geq 3$ still provide an infinite family of  such examples.

\end{section}

\begin{section}{Proofs of Theorems \ref{thm:subquartic} and \ref{thm:subcubic}}

If $\phi$ is an edge-colouring of a graph $G$, and $a$ and $b$ are two colours of $\phi$, then a path of $G$ with edges coloured alternately $a$ and $b$ under $\phi$ is called an \emph{$(a,b)$-alternating path} in $\phi$. Alternating paths will be very useful in this section, where we prove a series of lemmas leading up to the proofs of Theorems \ref{thm:subquartic} and \ref{thm:subcubic}.

\begin{lemma}\label{lem:path}
  Let $G$ be a graph and suppose that $\phi$ and $\psi$ are $k$-edge-colourings of $G$. Let $F=\{e\in E(G)\,|\,\phi(e)\neq \psi(e)\}$. If $F$ is the union of vertex-disjoint paths in $G$, then $\phi \sim_k \psi$.
\end{lemma}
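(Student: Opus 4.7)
The plan is to induct on $|F|$, with base case $|F|=0$ giving $\phi=\psi$ immediately. For the inductive step, pick any path $P = v_0 e_1 v_1 \cdots e_\ell v_\ell$ of $F$ and set $a=\phi(e_1)$, $b=\psi(e_1)$, so $a\neq b$. First I would note that colour $b$ is missing at $v_0$ under $\phi$: every edge at $v_0$ other than $e_1$ lies outside $F$ and so carries the same colour under $\phi$ and $\psi$, and properness of $\psi$ at $v_0$ together with $\psi(e_1)=b$ forbids any such edge from having $\psi$-colour (hence $\phi$-colour) $b$. Consequently the $(a,b)$-Kempe chain $C$ of $\phi$ that contains $v_0$ begins at $v_0$ and traverses $e_1$ first.

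The strategy is to perform the Kempe change along $C$ to obtain $\phi'$, and show that the new disagreement set $F' = \{e : \phi'(e)\neq\psi(e)\}$ is again a union of vertex-disjoint paths with $|F'| < |F|$; the induction hypothesis applied to $(\phi',\psi)$ then yields $\phi \sim_k \phi' \sim_k \psi$. Clearly $e_1 \notin F'$, since $\phi'(e_1)=b=\psi(e_1)$. Tracing $C$ past $v_1$, any $\phi$-edge of colour $b$ at $v_1$ other than $e_1$ must lie in $F$ (otherwise its $\psi$-colour $b$ would collide with $\psi(e_1)=b$ at $v_1$), and vertex-disjointness of $F$-paths then forces such an edge to be $e_2$. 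Hence $C$ either terminates at $v_1$---in which case $F'=F\setminus\{e_1\}$ and we are immediately done---or continues along $e_2$.

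The main obstacle is that the symmetric $\psi$-based argument is not available at $v_2$ and beyond, so in principle $C$ can leave $P$ through a non-$F$ edge; when that happens, non-$F$ edges of $C$ enter $F'$ and any $F$-edge of $C$ whose $\psi$-colour falls outside $\{a,b\}$ stays in $F'$, so a naive count does not immediately give $|F'|<|F|$. The technical heart of the proof is to verify two things in this case: (i) every vertex still has $F'$-degree at most $2$, which follows because $F$ is already vertex-disjoint and $C$ is itself a path, contributing at most one non-$F$ edge per vertex, and no $F'$-cycle is created because of the properness of $\psi$; and (ii) $|F'|$ is strictly smaller than $|F|$, which may require an adaptive choice of chain, for instance preceding the main swap by preparatory Kempe changes that shorten $C$ (by ``clearing'' the colour $b$ further along its continuation) or instead working from both endpoints of $P$ and taking whichever end produces a chain that actually reduces $|F|$. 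I expect the bulk of the argument to be this degree-and-acyclicity bookkeeping at the vertices that $C$ shares with the $F$-paths it visits, together with the careful selection of the swap needed to guarantee a net reduction in $|F|$.
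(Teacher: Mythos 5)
Your opening moves coincide with the paper's (induction on $|F|$, the observation that $b=\psi(e_1)$ is missing at $v_0$ under $\phi$, and the analysis showing the chain can only continue along $e_2$ at $v_1$), but the proof is not complete: exactly at the point you label ``the main obstacle'' --- the chain $C$ leaving $P$ through a non-$F$ edge at $v_2$ or beyond --- the argument dissolves into a list of things that ``may be required''. Neither suggested rescue is carried out, and the bookkeeping you do assert is not correct as stated: in item (i), $C$ can contribute \emph{two} non-$F$ edges at an internal vertex (it traverses the unique $a$-edge and the unique $b$-edge of $\phi$ there), and if that vertex is also interior to some $F$-path whose edges avoid $C$, the new disagreement set acquires a vertex of degree four, so $F'$ need not be a union of vertex-disjoint paths at all. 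Likewise ``preparatory Kempe changes that shorten $C$'' and ``work from the other endpoint'' are not shown to exist or to succeed. So the inductive step is genuinely missing in the only case that requires an idea.

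The idea that closes the gap --- and the one the paper uses --- is to consider the maximal $(a,b)$-alternating paths starting at $v_0$ in \emph{both} colourings, $P_\phi$ and $P_\psi$, not just the chain in $\phi$. As long as both are alive they are forced to march along $P$ in lockstep: if both reach $v_j$ via $e_j$, then $\{\phi(e_j),\psi(e_j)\}=\{a,b\}$, so the colour $P_\phi$ seeks at $v_j$ equals $\psi(e_j)$ and hence cannot appear on a non-$F$ edge at $v_j$ (it would clash with $\psi(e_j)$ under $\psi$), and symmetrically for $P_\psi$; thus each chain either stops at $v_j$ or continues along $e_{j+1}$, and both must stop by $v_\ell$. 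Consequently whichever chain stops first equals an initial segment $v_0\cdots v_i$ of $P$ and is contained in the other, and along that segment the two colourings use $a$ and $b$ in opposite roles. Performing the Kempe change on that chain --- in $\phi$ if it is $P_\phi$, but in $\psi$ if it is $P_\psi$ --- recolours only edges of $P$ and makes the colourings agree on all of them, so the disagreement set simply shrinks to a sub-collection of vertex-disjoint paths and induction applies with no degree or acyclicity bookkeeping. The step your sketch never contemplates is precisely the willingness to apply the exchange to $\psi$ rather than $\phi$ when it is $P_\psi$ that stays on $P$; committing to swapping only in $\phi$ is what creates the obstacle you could not resolve.
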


\begin{proof}
Our proof is by induction on $|F|$. Let $P=v_1 v_2\cdots v_l$ be a maximal path on which $\phi$ and $\psi$ do not agree. Suppose that the edge $v_1v_2$ has  colour 1 under $\phi$, and colour 2 under $\psi$. Since every other edge incident to $v_1$ has the same colour under both $\phi$ and $\psi$, we know that 2 is missing at $v_1$ in $\phi$, and 1 is missing at $v_1$ in $\psi$. We now consider the maximal $(1,2)$-alternating paths starting at $v_1$ in both $\phi$ and $\psi$; call these paths $P_{\phi}$ and $P_{\psi}$, respectively. Since $\phi$ and $\psi$ agree on every edge incident to $P$, there exists a vertex $v_i \in V(P)$, where $2\leq i\leq l$, such that $v_1v_2 \cdots v_i$ is equal to one of $P_{\phi}$ and $P_{\psi}$ and is a subpath of the other. Without loss of generality, we may assume that $P_{\phi}$ is  $v_1v_2 \ldots v_i$. Then we may modify $\phi$ by performing a $(1, 2)$-exchange on $P_{\phi}=v_1v_2 \cdots v_i$. After this exchange, the two colourings agree on the edges of  $v_1v_2 \cdots v_i$. We can thus complete the proof by induction.
\end{proof}

For both Theorem \ref{thm:subquartic} and Theorem \ref{thm:subcubic}, the most difficult cases to handle are regular graphs. Using Lemma \ref{lem:path}, it is actually not hard to prove that all $(\Delta+1)$-edge-colourings  are Kempe equivalent when the graph in question is a connected non-regular subcubic graph, and we can even show this to be true for a special family of non-regular subquartic graphs. In what follows, for $\phi$ an edge-colouring of a graph $G$, and $G'$ a subgraph of $G$, we denote by $\phi|_{G'}$ the restriction of $\phi$ to $G'$.

\begin{lemma}\label{lem:3nonregular}
  If $G$ is a connected subcubic graph that has a vertex of degree at most two, then $ \kappa_E(G,4)=1.$
\end{lemma}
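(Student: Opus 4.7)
The plan is to proceed by induction on $|E(G)|$, with the trivial base case where $G$ has at most one edge. For the inductive step, I would choose a vertex $v$ with $d(v)\leq 2$, fix an incident edge $e = uv$, and set $G' := G - e$. In $G'$, the vertex $v$ has degree at most $1$ and (if it lies in a different component) the vertex $u$ has degree at most $2$, so every component of $G'$ contains a vertex of degree at most $2$. Applying the inductive hypothesis to each component then yields $\kappa_E(G', 4) = 1$.

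Given two $4$-edge-colourings $\phi$ and $\psi$ of $G$, I would first apply a single Kempe change to arrange $\phi(e) = \psi(e)$: if $\phi(e) \neq \psi(e)$, swap the colours $\phi(e)$ and $\psi(e)$ on the $(\phi(e), \psi(e))$-alternating component of $\phi$ containing $e$. Second, since $\phi|_{G'}$ and $\psi|_{G'}$ are Kempe equivalent in $G'$ by induction, there is a sequence of Kempe changes in $G'$ driving $\phi|_{G'}$ to $\psi|_{G'}$, and I would simulate this sequence by Kempe changes in $G$ so that, at the end, the colour of $e$ still matches $\psi(e)$.

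The main obstacle is the simulation. A $G'$-Kempe change with colour pair $\{x,y\}$ acting on a component $K'$ lifts verbatim to $G$ whenever $\phi(e) \notin \{x,y\}$ (so $e$ is absent from the $(x,y)$-subgraph) or $K' \cap \{u,v\} = \emptyset$ (so $K'$ is also a component of $G(x,y)$). The difficulty is the case $\phi(e) \in \{x, y\}$ and $K' \cap \{u, v\} \neq \emptyset$: in $G$ the corresponding $(x,y)$-component strictly extends $K'$ by picking up $e$ and possibly further edges accessible through $v$, so the lifted swap would alter the colour of $e$. To handle this, I would use the slack at $v$: since $d(v) \leq 2$ and there are $4$ colours, $v$ has at least two missing colours in the current colouring, at least one of which, call it $c$, lies outside $\{x,y\}$. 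The strategy is to "park" the colour of $e$ onto $c$ via an auxiliary $(\phi(e), c)$-Kempe change before the problematic swap, then perform the lifted $(x,y)$-swap (which can no longer touch $e$, since $e$'s colour now lies outside $\{x,y\}$), and finally "unpark" by reversing the auxiliary change.

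The technical heart of the argument is verifying that this parking/unparking scheme can be orchestrated consistently across the whole simulation without cascading side-effects, so that the accumulated sequence of Kempe changes in $G$ really produces $\psi$ from $\phi$. An alternative is to sidestep the lifting obstruction altogether by strengthening the inductive statement—for instance, by allowing a prescribed colour on some designated edge at $u$—so that any Kempe change produced by the inductive hypothesis automatically respects the colour of $e$.
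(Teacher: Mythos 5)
Your overall strategy---induct, apply the inductive hypothesis to a smaller graph, and lift the resulting Kempe sequence back to $G$ using the slack at the low-degree vertex $v$---is the same as the paper's (which deletes the vertex $v$ rather than a single edge and inducts on $|V(G)|$, then repairs the at-most-two edges at $v$ at the very end via Lemma~\ref{lem:path}). But your mechanism for the problematic case has a genuine gap. You choose the parking colour $c$ to be missing at $v$ and outside $\{x,y\}$, but you never arrange for $c$ to be missing at $u$. If $c$ is present at $u$, the $(\phi(e),c)$-component containing $e$ extends through $u$ into $G'$, so the ``parking'' swap recolours edges of $G'$ and the simulated colouring no longer matches the colouring it is supposed to equal at that stage of the inductive sequence. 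Moreover, even when the parking is confined to $e$, the subsequent $(x,y)$-swap shares the colour $\phi(e)$ with the pair $\{\phi(e),c\}$, so it alters the $(\phi(e),c)$-subgraph; ``reversing the auxiliary change'' afterwards is therefore not a well-defined inverse operation. You flag exactly this verification as the ``technical heart'' and leave it open---but it is the entire content of the lemma.

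The missing counting argument, which is where the paper does its real work, is this: the lift of an $(x,y)$-swap actually fails only when the $(x,y)$-component in $G$ spills \emph{past} $v$ into further edges of $G'$, and in that situation both edges at $v$ are coloured from $\{x,y\}$, so $v$ misses \emph{both} colours of $\{1,2,3,4\}\setminus\{x,y\}$; meanwhile the relevant endpoint of the alternating path is a degree-at-most-3 vertex already carrying colours $x$ and $y$, so it misses at least one colour of $\{1,2,3,4\}\setminus\{x,y\}$. Hence a \emph{common} missing colour exists at both ends of the offending edge, the parking swap is genuinely a single-edge recolouring, and the lifted $(x,y)$-swap then restricts to $G'$ exactly as intended. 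No unparking is needed: you should drop the requirement of preserving $\psi(e)$ throughout (and the initial normalization of $\phi(e)$), since after the simulation the two colourings can disagree only on $e$ itself, which is a path, so Lemma~\ref{lem:path} finishes the proof. With these repairs your argument becomes essentially the paper's proof.
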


\begin{proof}
The proof is by induction on $|V(G)|$. Let $\phi$ and $\psi$ be 4-edge-colourings of $G$. Choose $v \in V(G)$ with $\deg(v)<3$, and apply induction to the connected components of $G'=G\setminus v$ to get $\phi|_{G'} \sim_4 \psi|_{G'}$. We claim that this series of $K$-changes in $G'$ extends to a series of $K$-changes in $G$.

Suppose that some $K$-change in $G'$ (say a (1,2)-exchange) does not correspond to a $K$-change in $G$. This means that there is a maximal $(1,2)$-alternating path which ends at a neighbour $x$ of $v$ in $G'$, but which continues though $v$, and goes through the second neighbour $y$ of $v$ in $G$. Since $v$ has degree at most $2$, we know that colours 3 and 4 are both missing at $v$, and at least one of these colours is missing at $x$. Assume, without loss of generality, that the edge $xv$ is coloured 1 and that colour 4 is missing at $x$. See Figure \ref{fig:3nonregular} (note that in this figure, and all those that follow, a bar over a colour indicates that the colour is missing at the associated vertex). Then the $(1,2)$-exchange in $G'$ can be extended to a $(1, 2)$-exchange in $G$, provided it is preceded in $G$ by a $(1,4)$-exchange on the edge $xv$.

\begin{figure}[htbp]
  \centering
  \includegraphics[height=2.5cm]{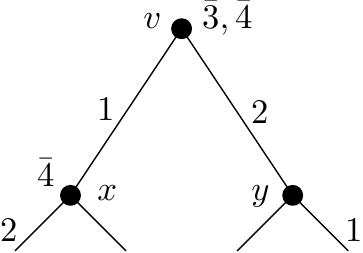}
  \caption{Extending a $K$-change in $G'$ to a $K$-change in $G$ (Lemma \ref{lem:3nonregular})}
  \label{fig:3nonregular}
\end{figure}

The above argument implies that there is a series of $K$-changes which can be performed on $\phi$ so that it agrees with $\psi$ on all edges of $G$, except possibly on the edges incident with $v$. However, since any disagreement will be a path of length one or two, we may apply Lemma \ref{lem:path} to complete the proof.
\end{proof}

\begin{lemma}\label{lem:4nonregular}
  Let $G$ be a connected subquartic graph with the property that no pair of vertices of degree four are adjacent. Suppose further that there exists $v \in V(G)$ satisfying either
  \begin{enumerate}[label=\textup{(\alph*)}]
  \item\label{sublem:4nonregular:d<=2}
    $\deg(v) \leq 2$, or
  \item\label{sublem:4nonregular:d=3}
    $\deg(v)=3$, and $v$ has at most two neighbours of degree four.
  \end{enumerate}
  Then $\kappa_E(G, 5)=1$.
\end{lemma}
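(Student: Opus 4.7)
The plan is to proceed by induction on $|V(G)|$, in close parallel to the proof of Lemma \ref{lem:3nonregular}. Base cases (small $|V(G)|$) are handled by direct inspection. For the inductive step, pick $v$ promised by hypothesis~(a) or~(b), let $G'=G\setminus v$, apply induction componentwise to $G'$ to obtain a sequence $\sigma$ of K-changes transforming $\phi|_{G'}$ into $\psi|_{G'}$, lift $\sigma$ to $G$ with auxiliary K-changes, and finish by reducing the residual disagreement (located on the edges at $v$) to a union of paths and invoking Lemma \ref{lem:path}.

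I first verify the inductive hypothesis applies to every component of $G'$. The condition ``no two degree-four vertices are adjacent'' is inherited by subgraphs. Each component of $G'$ contains at least one neighbour $x$ of $v$. If $\deg_G(x)\le 3$, then $\deg_{G'}(x)\le 2$, so $x$ satisfies (a) in its component. If $\deg_G(x)=4$, then $\deg_{G'}(x)=3$, and every other neighbour of $x$ in $G$ (hence in $G'$) has degree $\le 3$, so $x$ satisfies (b) with zero degree-four neighbours.

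Next, I lift $\sigma$ to $G$. Each $(a,b)$-exchange from $\sigma$ either transfers directly (its Kempe component in $G$ coincides with the one in $G'$) or extends through $v$ in $G$, meaning the $(a,b)$-component in $G'$ ends at some neighbour $x$ of $v$ with $xv$ coloured $a$, while a second edge $vy$ is coloured $b$. Mimicking Lemma \ref{lem:3nonregular}, I try an auxiliary $(a,e)$-exchange recolouring $xv$ alone, which requires $e$ to be missing at both $x$ and $v$. Let $c$ be the third colour used at $v$, so the set of missing colours at $v$ lies inside $\{1,\dots,5\}\setminus\{a,b,c\}$. In case (a), $v$ has at least three missing colours and $M_x\subseteq\{1,\dots,5\}\setminus\{a,b\}$, so an overlap is forced and a single-edge auxiliary exists. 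In case (b), the single-edge auxiliary fails only when $\deg(x)=4$ and $x$'s unique missing colour is $c$; then $x$ is a degree-four neighbour of $v$, and by hypothesis at most two neighbours of $v$ have degree four, so $x_3$ (the degree-$\le 3$ neighbour) is available. I replace the failing auxiliary with a longer $(a,c)$-exchange through $v$ that recolours $xv$ from $a$ to $c$ but may alter edges of $G'$ along the $(a,c)$-component; these $G'$-side-effects are reconciled by re-invoking the induction hypothesis on the perturbed colouring of $G'$.

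Once $\sigma$ is lifted, $\phi$ agrees with $\psi$ on $E(G')$, with possible disagreement only on the edges at $v$. In case (a), this disagreement lies on at most two edges, forming a path, and Lemma \ref{lem:path} completes the proof. In case (b) the disagreement may span all three edges at $v$, forming a star rather than a path; I perform a final short K-change at $vx_3$ (made possible by the $\ge 2$ missing colours at $x_3$) to force agreement on $vx_3$, leaving at most a 2-edge path where Lemma \ref{lem:path} again applies. The principal obstacle I expect is precisely the pathological sub-case of case (b) in the lifting step: the longer $(a,c)$-exchange perturbs $G'$, and ensuring that this bookkeeping terminates (through re-invocation of induction) and that the final disagreement at $v$ is actually a path is the most delicate part of the argument, and is where the hypothesis that at most two of $v$'s neighbours have degree four is essential.
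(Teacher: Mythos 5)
Your overall strategy --- induction on $|V(G)|$, deleting $v$, verifying that every component of $G'=G\setminus v$ inherits hypothesis (a) or (b), lifting the K-changes, and finishing with Lemma \ref{lem:path} --- is exactly the paper's, and your treatment of the inductive hypothesis and of case (a) of the lifting is correct. The genuine gap is in the hard sub-case of the lifting, which you have correctly isolated but not resolved. When $\deg(v)=3$ with edges $xv,yv,vz$ coloured $1,2,3$, colours $4,5$ missing at $v$, and both $x$ and $y$ of degree four with unique missing colour $3$, your proposed $(1,3)$-exchange through $v$ does leak into $G'$ (it continues past $z$), and ``reconciling the side-effects by re-invoking the induction hypothesis on the perturbed colouring'' is not a proof: it restarts the entire lifting with a new colouring of $G'$ and supplies no decreasing measure, so the process could in principle cycle forever. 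The paper avoids perturbing $G'$ at all by exploiting the low degree of $z$ (which is precisely where hypothesis (b) enters): either some colour in $\{4,5\}$ is missing at $z$, in which case one recolours $vz$ with it (a single-edge K-change, as that colour is also missing at $v$) and then recolours $xv$ from $1$ to $3$ (again a single-edge K-change, since $3$ is now missing at both $x$ and $v$); or else $z$'s colours are exactly $\{3,4,5\}$, so $1$ and $2$ are missing at $z$ and $yvz$ is a \emph{maximal} $(2,3)$-alternating path, and exchanging along it breaks the $(1,2)$-path. All edges touched are incident with $v$, so the lift of the remaining changes in $\sigma$ is undisturbed.

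A second, smaller gap is in your endgame for case (b). When all three edges at $v$ disagree, your ``final short K-change at $vx_3$'' is not available in general: the colour $\psi(vx_3)$ is missing at $x_3$ under the current colouring $\varphi$, but it need not be missing at $v$, so recolouring that single edge need not be a Kempe change. The paper first argues that one may assume the three $\psi$-colours are a cyclic shift $(2,3,1)$ of the $\varphi$-colours $(1,2,3)$ (otherwise some edge's target colour is $4$ or $5$, which is missing at $v$, and that edge can be fixed at once), and then uses the second missing colour at the degree-$\le 3$ neighbour: if it is $4$ or $5$ one recolours two edges in succession, and if it is $3$ one exchanges along the $(1,3)$-alternating path $xvz$. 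You need some version of this case analysis to guarantee that the residual disagreement at $v$ really does reduce to a path before Lemma \ref{lem:path} can be applied.
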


\begin{proof}
The proof is by induction on $|V(G)|$. Let $\phi$ and $\psi$ be 5-edge-colourings of $G$. Choose $v \in V(G)$ satisfying one of \ref{sublem:4nonregular:d<=2} or \ref{sublem:4nonregular:d=3}, and let $G'=G\setminus v$. Note that if $u$ is a neighbour of $v$ in $G$ and $\deg(u) \leq 3$ in $G$, then $u$ satisfies \ref{sublem:4nonregular:d<=2} in $G'$. On the other hand, if $\deg(u)=4$ in $G$, then $u$ cannot have any degree $4$ neighbours in $G$, and hence satisfies \ref{sublem:4nonregular:d=3} in $G'$.  Since this holds for every neighbour $u$ of $v$, we may apply induction to the connected components of $G'$ to get $\phi|_{G'} \sim_5 \psi|_{G'}$. We claim that this series of $K$-changes in $G'$ extends to a series of $K$-changes in $G$.

Suppose that some $K$-change in $G'$ (say a (1,2)-exchange) does not correspond to a $K$-change in $G$. This means that there is a maximal $(1,2)$-alternating path that has an end at a neighbour $x$ of $v$ in $G'$, but that goes through $v$, and through a second neighbour $y$ of $v$ in $G$.  Assume, without loss of generality, that the edge $xv$ is coloured 1 and the edge $yv$ is coloured 2. If a colour $\alpha\in\{3,4,5\}$ is missing at both $v$ and $x$, or at both $v$ and $y$, then the $(1,2)$-exchange in $G'$ can be extended to a $(1,2)$-exchange in $G$ provided it is preceded in $G$ by a $(1,\alpha)$-exchange on $xv$ or a $(2,\alpha)$-exchange on $yv$, respectively. Since $x$ and $y$ have degree at most $4$, they are each missing at least one colour  from $\{3,4,5\}$. Without loss of generality, we may assume that $3$ is missing at both $x$ and $y$ but is present at $v$ (say on an edge $vz$), and that $4,5$ are both missing at $v$ but present at $x,y$ (see Figure \ref{fig:4nonregular1}). In particular, this implies that $v$ has degree $3$, and that both $x$ and $y$ have degree $4$ in $G$. Since $v$ satisfies \ref{sublem:4nonregular:d=3}, this means that $z$ has degree at most 3 in $G$. If either 4 or 5 are missing at $z$, then  we can recolour $vz$ with 4 or 5, and then do a $(3, 1)$-exchange on $xv$ to break the $(1, 2)$-path. So, 4 and 5 are both present at $z$, and 1 and 2 are both missing. Hence, we can do a $(2, 3)$-exchange on the path $yvz$, which has the effect of breaking the $(1, 2)$-path, since 1 is missing at $z$. Hence, we have shown that there is a series of K-changes which can be performed on $\phi$ so that it agrees with $\psi$ on all edges of $G$, except possibly those incident with~$v$.

\begin{figure}[htbp]
  \centering
  \includegraphics[height=3.5cm]{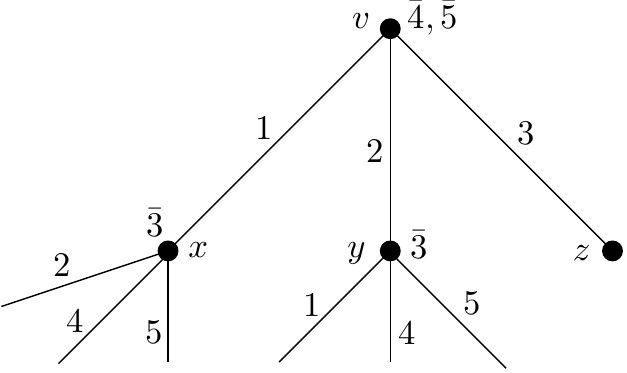}
  \caption{Extending a $K$-change in $G'$ to a $K$-change in $G$ (Lemma \ref{lem:4nonregular})}
  \label{fig:4nonregular1}
\end{figure}

Suppose that after the series of $K$-changes have been applied to $\phi$, we get a colouring $\varphi$ where an edge incident to $v$ (say $xv$)
has different colours under $\varphi$ and $\psi$. We know, by Lemma \ref{lem:path}, that if $\varphi$ and $\psi$ only disagree on a path, then $\phi \sim_5 \psi$, as desired. So, we may assume that $v$ satisfies \ref{sublem:4nonregular:d=3}, and that all three edges incident to $v$ have a colour different in $\varphi$ than in $\psi$.  Suppose that the three edges incident to $v$ are $xv, yv$ and $zv$, and they are coloured 1, 2 and 3 under $\varphi$, respectively. Since every other edge incident to $x, y$ and $z$ agrees in both $\varphi$ and $\psi$, we know that the
colour of each edge in $\psi$ is missing at $x, y$ and $z$, respectively, in $\varphi$. Since, $4$ and $5$ are
missing at $v$, this means that if any of the three edges are coloured 4 or 5 in $\psi$, we can
immediately fix that edge and hence complete our proof by Lemma \ref{lem:path}.  So, we may assume,
without loss of generality, that the colours of $xv$, $yv$ and $zv$ in $\psi$ are $2, 3, 1$,
respectively, under $\psi$ (see Figure \ref{fig:4nonregular2}). Since $v$ satisfies \ref{sublem:4nonregular:d=3}, at least one of $x,y,z$ is missing more than one colour under $\varphi$. Without loss of generality, suppose $x$ has a second missing colour.  If this colour is $4$ or $5$,
then we can recolour $xv$ in $\varphi$, and then recolour $zv$ (both of these are $K$-changes), to complete our proof by applying Lemma \ref{lem:path}.
So, the second missing colour at $x$ under $\varphi$ is $3$. However, now we can make an exchange along
the $(1,3)$-alternating path $xvz$ in $\varphi$, and complete the proof by applying Lemma $\ref{lem:path}$.
\end{proof}

\begin{figure}[htbp]
  \centering
  \includegraphics[height=2.7cm]{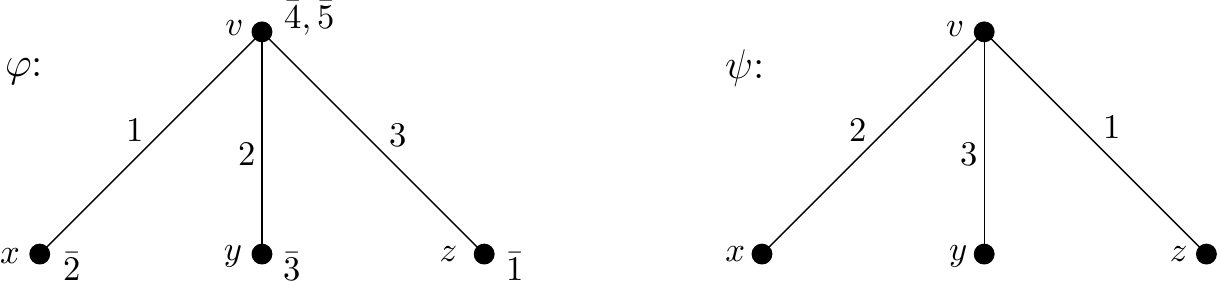}
  \caption{The neighbourhood of $v$ under $\varphi$ and $\psi$ (Lemma \ref{lem:4nonregular})}
  \label{fig:4nonregular2}
\end{figure}

We require two additional preliminary results before completing the proof of Theorem \ref{thm:subquartic}. The first of these is a simple observation about maximum matchings.

\begin{lemma}\label{lem:matching}
  Let $M$ be a maximum matching in a graph $G$, and let $xy$ be an edge of $M$. Suppose that $y$ has a neighbour $y_1$ that is not covered by $M$. Then every neighbour of $x$ that is different from $y_1$ is covered by $M$.
\end{lemma}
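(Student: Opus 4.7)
The plan is to argue by contradiction via the standard augmenting path characterization of maximum matchings. Suppose for contradiction that $x$ has a neighbour $x_1 \neq y_1$ that is not covered by $M$. I will exhibit an $M$-augmenting path, contradicting the maximality of $M$.

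Consider the path $P = y_1\, y\, x\, x_1$. First note $y_1 \neq x$ since $x$ is covered by $M$ (via the edge $xy$) whereas $y_1$ is not, and $x_1 \neq y$ for the same reason, and $y_1 \neq x_1$ by assumption. Hence $P$ has four distinct vertices. Along $P$, the edge $y_1 y$ is not in $M$ (because $y$ is already matched to $x$, and $M$ is a matching so $y$ has no second $M$-edge), the middle edge $yx$ is in $M$, and the edge $xx_1$ is not in $M$ (again because $x$ is matched to $y$). Thus $P$ alternates between non-$M$ and $M$ edges, starting and ending with a non-$M$ edge, and both endpoints $y_1,x_1$ are unsaturated by $M$.

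Therefore $P$ is an $M$-augmenting path, and setting $M' = (M \setminus \{xy\}) \cup \{y_1 y, x x_1\}$ produces a matching of size $|M|+1$, contradicting the assumption that $M$ was maximum. This forces every neighbour of $x$ other than $y_1$ to be $M$-covered. There is no substantive obstacle here — the entire argument is a one-line application of Berge's augmenting path theorem, and the only thing to check carefully is that the four vertices of the candidate augmenting path are distinct, which follows immediately from the matched/unmatched status of each vertex.
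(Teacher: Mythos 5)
Your proof is correct and is essentially identical to the paper's: both argue by contradiction that a second unmatched neighbour $x_1\neq y_1$ of $x$ would yield the $M$-augmenting path $x_1xyy_1$, contradicting maximality. You simply spell out the distinctness and alternation checks that the paper leaves implicit.
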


\begin{proof}
Suppose, for a contradiction, that $x$ has a neighbour $x_1\neq y_1$ not covered by $M$. Then $x_1x y y_1$ is an augmenting path, contradicting the maximality of $M$.
\end{proof}

The second preliminary result required is in fact substantial, but its proof follows from known proofs of Vizing's Theorem (e.g. \cite{Vi}). The details of this implication are included in \cite{Mo}, and we omit them here.

\begin{lemma}\label{lem:deltaplusone}
  If $G$ is a graph and $c$ is a $k$-edge-colouring of $G$ with $k \geq \Delta+1$, then there exits a $(\Delta+1)$-edge-colouring $c'$ of $G$ such that $c\sim_k c'$.
\end{lemma}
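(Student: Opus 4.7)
The plan is to argue by induction on the number $N$ of edges $e$ with $c(e)>\Delta+1$. When $N=0$ the colouring $c$ already uses only colours from $\{1,\ldots,\Delta+1\}$, so there is nothing to show; the task is therefore to produce $c'\sim_k c$ with exactly $N-1$ such edges and iterate. Fix any edge $e_0=uv$ with $c(e_0)=\gamma\ge\Delta+2$. Since $\deg u,\deg v\le\Delta$ and the colouring is proper, at each of $u$ and $v$ at least one colour from $\{1,\ldots,\Delta+1\}$ is missing. The goal is to recolour $e_0$ into $\{1,\ldots,\Delta+1\}$ via a sequence of Kempe changes that touch only colours in this range, so that no new edge of colour greater than $\Delta+1$ is ever created.

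To do this I mimic Vizing's original proof of $\chi'(G)\le\Delta+1$, restricted to the palette $\{1,\ldots,\Delta+1\}$. Pick $\alpha_0\in\{1,\ldots,\Delta+1\}$ missing at $u$, and build a maximal Vizing fan at $v$: iteratively choose $w_i$ with $c(vw_i)=\alpha_{i-1}$ and pick $\alpha_i\in\{1,\ldots,\Delta+1\}$ missing at $w_i$, which is possible since $\deg w_i\le\Delta$. The fan terminates either (i) with some $\alpha_m$ missing at $v$ as well, or (ii) with $\alpha_m$ equal to a previously used $\alpha_j$ for some $0\le j<m$. In case (i), perform the standard Vizing ``shift'' in reverse order: for $i=m,m-1,\ldots,1$ recolour $vw_i$ from $\alpha_{i-1}$ to $\alpha_i$, and finally recolour $e_0$ from $\gamma$ to $\alpha_0$. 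At the moment each individual edge is recoloured, the new colour is missing at $v$ (by the previous shift, or because $\alpha_m$ was already missing) and missing at the other endpoint (by the fan construction, or because $\alpha_0$ was chosen missing at $u$); hence the affected edge is an isolated component of the relevant two-colour subgraph, making the recolouring a genuine Kempe change. In case (ii), first perform a Kempe swap along the $(\alpha_0,\alpha_m)$-alternating path starting at $u$; the usual Vizing bookkeeping shows that this truncates the fan to a situation governed by case (i), which is then finished as above.

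After this sequence of Kempe changes, the only edges whose colours have changed are $e_0$, some edges of the fan at $v$, and possibly some edges along an $(\alpha_0,\alpha_m)$-alternating path — all of whose new and old colours lie in $\{1,\ldots,\Delta+1\}$. In particular, $e_0$ now has colour $\alpha_0\in\{1,\ldots,\Delta+1\}$ and no edge has acquired a colour exceeding $\Delta+1$, so $N$ has strictly decreased. Iterating completes the proof. The main obstacle — and the reason the authors defer to \cite{Mo} and \cite{Vi} — is the delicate case (ii): one must verify that the alternating-path swap leaves the fan data consistent and that the cascade in case (i) still applies afterwards. This is exactly the combinatorial heart of standard proofs of Vizing's theorem, and recasting each elementary operation as a Kempe change (as sketched above) is essentially the only extra observation needed beyond those proofs.
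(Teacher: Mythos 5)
The paper itself gives no proof of this lemma --- it defers entirely to \cite{Mo}, remarking only that the statement ``follows from known proofs of Vizing's Theorem'' --- so your strategy of inducting on the number of edges coloured outside $\{1,\ldots,\Delta+1\}$ and realising each elementary step of the Vizing fan argument as a Kempe change is precisely the argument the authors have in mind. Your case (i) is correct and carefully justified: each recoloured edge receives a colour missing at both of its ends, hence forms a singleton component of the relevant two-coloured subgraph, so each recolouring is a genuine Kempe change, and the final $(\gamma,\alpha_0)$-swap alters only $e_0$.

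The gap is in case (ii), which you yourself identify as the heart of the matter: the swap you prescribe, ``along the $(\alpha_0,\alpha_m)$-alternating path starting at $u$,'' is not the right move and will not in general reduce to case (i). The colour $\alpha_0$ is \emph{present} at the fan centre $v$ (it is the colour of $vw_1$), so this swap does not produce a colour missing simultaneously at $v$ and at a fan vertex, which is exactly what case (i) requires; and in the subcase $j=0$ one has $\alpha_0=\alpha_m$, so the prescribed swap is not even well defined. The correct move takes a colour $\alpha\in\{1,\ldots,\Delta+1\}$ missing at $v$ (one exists since $\deg v\le\Delta$), sets $\beta:=\alpha_j=\alpha_m$, and notes that since $\alpha$ is missing at $v$, at most one of the two maximal $(\alpha,\beta)$-paths starting at $w_j$ and at $w_m$ can end at $v$; one swaps along a path that does not end at $v$, thereby making $\alpha$ missing at $w_j$ (respectively $w_m$) while keeping it missing at $v$, and then shifts the fan truncated at that vertex, colouring its last edge $\alpha$ and finally $e_0$ with $\alpha_0$. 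Even then the deferred ``bookkeeping'' is genuinely needed: one must check that the swap does not destroy the missing colours at the intermediate fan vertices (it does not, because no $\alpha_i$ can equal $\alpha$ and $\alpha_i=\beta$ only for $i\in\{j,m\}$) and must split into subcases according to whether the component of $w_j$ ends at $v$, since otherwise the path swapped from $w_m$ may terminate at $w_j$ and spoil the full-fan shift. As written, case (ii) therefore does not go through; repaired along the lines above, the proof is complete.
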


We are now able to prove Theorem \ref{thm:subquartic}.

\setcounter{tmptheorem}{\value{theorem}}
\setcounter{theorem}{\value{thmsubquartic}}
\begin{theorem}
  If $G$ is a graph with maximum degree $\Delta \leq 4$, then $\kappa_E(G, \Delta+2)=1.$
\end{theorem}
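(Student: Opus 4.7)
For $\Delta \leq 3$ the claim is immediate: Lemma \ref{lem:deltaplusone} shows that every $(\Delta+2)$-edge-colouring is Kempe equivalent to a $(\Delta+1)$-edge-colouring, and Theorem \ref{thm:subcubic} says all $(\Delta+1)$-edge-colourings of a subcubic graph are Kempe equivalent. The substance of the theorem is therefore the case $\Delta = 4$: show that any two $6$-edge-colourings $\phi, \psi$ of a subquartic graph $G$ are Kempe equivalent. A first application of Lemma \ref{lem:deltaplusone} reduces this to the case that both $\phi$ and $\psi$ are $5$-edge-colourings of $G$, so colour $6$ is missing everywhere under either.

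The key observation is that, since colour $6$ is missing at every vertex, for any edge $e$ of current colour $c$ the one-edge path $e$ is a maximal $(c,6)$-alternating path, and recolouring $e$ to $6$ is a valid Kempe change. Processing the edges of a matching $M$ one at a time (the matching property ensures $6$ remains missing at the endpoints of every not-yet-processed edge), I transform $\phi$ into a $6$-edge-colouring $\phi'$ whose colour class for $6$ is exactly $M$ and which agrees with $\phi$ elsewhere. Apply the identical procedure to $\psi$ to produce $\psi'$.

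Suppose now that $M$ is chosen so as to cover every degree-$4$ vertex of $G$. Then $G' := G - M$ is subcubic, and $\phi'|_{G'}$, $\psi'|_{G'}$ are $5$-edge-colourings of $G'$; by Theorem \ref{thm:subcubic} combined with Lemma \ref{lem:deltaplusone} they are Kempe equivalent in $G'$. Every $K$-change in this sequence uses two colours $a,b \in \{1,\ldots,5\}$, and since every edge of $M$ has colour $6$, no such edge lies on any $(a,b)$-alternating path; consequently the maximal $(a,b)$-alternating paths in $G'$ and in $G$ coincide, and the sequence lifts verbatim to $G$. This gives $\phi' \sim_6 \psi'$, hence $\phi \sim_6 \psi$.

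The main obstacle is exhibiting such a matching $M$, since a subquartic graph need not have one (for instance $K_5$ is $4$-regular of odd order). My plan is to take $M$ to be a maximum matching of $G$ and examine any degree-$4$ vertex $v$ it leaves exposed. Because $M$ is maximum, the set of uncovered vertices is independent, and Lemma \ref{lem:matching} applied to the $M$-edge at a neighbour of $v$ forces very tight local structure around $v$. In this situation $G - M$ has no two adjacent degree-$4$ vertices, and a modest case analysis shows that some degree-$3$ vertex of $G - M$ has at most two degree-$4$ neighbours in $G - M$, so the hypotheses of Lemma \ref{lem:4nonregular} are satisfied. Applying Lemma \ref{lem:4nonregular} inside $G - M$ (and separately to each component if $G - M$ happens to be disconnected) yields $\phi'|_{G-M} \sim_5 \psi'|_{G-M}$, which is once again lifted to $G$ as above, finishing the proof.
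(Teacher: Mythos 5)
Your overall route is the same as the paper's: reduce to $5$-edge-colourings via Lemma \ref{lem:deltaplusone}, recolour a maximum matching $M$ with the sixth colour by single-edge Kempe changes, and then apply Lemma \ref{lem:4nonregular} to $G-M$ (the lifting of $K$-changes from $G-M$ back to $G$ is fine, since colour $6$ lives only on $M$). The gap is in the step you dismiss as ``a modest case analysis.'' Lemma \ref{lem:4nonregular} must be applied to \emph{each connected component} of $G-M$ separately, so you need a vertex satisfying hypothesis (a) or (b) in \emph{every} component, not just one somewhere in $G-M$. Your argument via Lemma \ref{lem:matching} produces a single suitable vertex: the $M$-partner $x$ of a neighbour of an exposed vertex has all but at most one neighbour covered, hence satisfies (a) or (b) in $G-M$. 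But that vertex lies in one particular component, and nothing you say rules out another component in which every vertex fails both conditions.

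Such a ``bad'' component is not absurd on its face: unwinding the conditions, it would consist entirely of degree-$4$ vertices of $G$, split into $M$-covered vertices (degree $3$ in $G-M$) each of whose three $G-M$-neighbours are uncovered degree-$4$ vertices, and uncovered vertices (degree $4$ in $G-M$); Lemma \ref{lem:matching} then forces every covered vertex of the component to be matched to a vertex \emph{outside} the component, which is why the local argument cannot reach into it. The paper closes exactly this hole by proving only that \emph{some} maximum matching works, via induction on the number of components of $G-M$: given a bad component $C_1$, it takes an edge $xy$ of a cycle in $C_1$ with $y$ matched to $z$ in another component $C_2$, replaces $M$ by $M-yz+xy$ (still maximum), and thereby merges $C_1$ and $C_2$ into one component, reducing the component count. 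Without this matching-modification step (or some substitute for it), your proof is incomplete; with it, it coincides with the paper's.
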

\setcounter{theorem}{\value{tmptheorem}}

\begin{proof}
The proof is easy for $\Delta\leq 2$, and for $\Delta=3$ it follows from the same arguments as used below. (We can also use Theorem \ref{thm:subcubic} combined with Lemma \ref{lem:deltaplusone}.) Thus, we shall henceforth assume that $\Delta=4$. Let $\phi$ and $\psi$ be $6$-edge-colourings of $G$. By Lemma \ref{lem:deltaplusone}, there exist $5$-edge-colourings $\phi'$ and $\psi'$ of $G$ such that $\phi\sim_6\phi'$ and $\psi\sim_6\psi'$. Let $M$ be a maximum matching of $G$. Then $\phi'\sim_6\phi''$ and $\psi'\sim_6\phi''$, where $\phi''$, $\psi'$ are obtained from $\phi'$, $\psi'$ by recolouring every edge of $M$ with a new colour $6$. Let $G'=G\setminus M$. Then $\phi''|_{G'}$ and $\psi''|_{G'}$ are both $5$-edge-colourings of $G'$. If we can show that $\phi''|_{G'}\sim_5\psi''|_{G'}$, then this will complete the proof.

Note that there is no pair of adjacent degree 4 vertices in $G'$, as this would correspond to neither vertices being covered by $M$, and $M$ is maximum. So, by Lemma \ref{lem:4nonregular}, we get our desired result provided we can show that, for each connected component of $G'$, there is a vertex $v$ satisfying either: (a) $\deg_{G'}(v)\leq 2$; or (b) $\deg_{G'}(v)=3$ and $v$ has at most two neighbours of degree 4 in $G'$. In fact, we need only show that there exists some maximum matching $M$ of $G$ for which such vertices exist. We shall prove this by induction on the number of components of $G'$.

If $G'$ has only one component, then by Lemma \ref{lem:matching}, there is a vertex $v$ that is covered by $M$, and so are all but at most one of its neighbours. If $v$ has degree at most 3 in $G$, then $v$ has degree at most 2 in $G'$, and hence satisfies (a) in $G'$. Otherwise, $v$ has degree 4 in $G$ with no degree 4 neighbours in $G$. Hence, in $G'$, $v$ has degree 3 with no degree 4 neighbours, and thus satisfies (b).

We may now assume that $G'$ has more than one component. Suppose that $C_1$ is a component of $G'$ such that each vertex in $C_1$ satisfies neither (a) nor (b). Then the minimum degree in $C_1$ is three, so $C_1$ contains a cycle. Since $M$ is maximal, if $xy$ is an edge of this cycle, then exactly one of $x,y$ (say $y$) is covered by $M$ in $G$ (say $yz \in M$). If $z \in V(C_1)$, then one of $y$ or $z$ satisfies (a) or (b), by Lemma \ref{lem:matching} and our above reasoning. So, it must be the case that $z\in V(C_2)$ where $C_2$ is another component of $G'$. Now, define $M':=M -yz+xy$. Then $M'$ is another maximum matching of $G$. Moreover, in $G\setminus M'$, $V(C_1)\cup V(C_2)$ is the vertex set of a single component (since $xy$ is not an edge cut for $C_1$). So, we can complete our proof by induction.
\end{proof}

We conclude this section with the proof of our main result.

\setcounter{tmptheorem}{\value{theorem}}
\setcounter{theorem}{\value{thmsubcubic}}
\begin{theorem}
  If $G$ is a graph with maximum degree $\Delta\leq 3$,  then all $(\Delta+1)$-edge-colourings of $G$ are Kempe equivalent.
\end{theorem}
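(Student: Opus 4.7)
The plan is to proceed by induction on $|V(G)|$. Since components of $G$ can be handled independently, we may assume $G$ is connected. If $G$ is not cubic, i.e., has a vertex of degree at most two, then Lemma \ref{lem:3nonregular} already yields $\kappa_E(G,4) = 1$. Hence the content of the proof is the case where $G$ is 3-regular.

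For cubic $G$, the plan is to adapt the vertex-deletion strategy of Lemma \ref{lem:3nonregular}. Fix two 4-edge-colourings $\phi, \psi$ of $G$, choose any vertex $v$ with neighbours $u_1, u_2, u_3$, and set $G' := G \setminus v$. Each component of $G'$ is a connected subcubic graph containing some neighbour $u_i$ of degree at most two, so by Lemma \ref{lem:3nonregular} we obtain $\phi|_{G'} \sim_4 \psi|_{G'}$ via a sequence $\sigma$ of Kempe changes internal to $G'$. If $\sigma$ can be lifted to a corresponding sequence in $G$, then the resulting colouring agrees with $\psi$ on all of $G'$, with remaining disagreement confined to the three edges incident with $v$; this residual disagreement forms a star, hence a union of vertex-disjoint paths, and Lemma \ref{lem:path} finishes the proof.

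The lifting step is the technical core. A Kempe $(a,b)$-exchange on a component $C'$ of $G'$ fails to correspond to a Kempe change in $G$ precisely when the $(a,b)$-alternating component in $G$ containing $C'$ is strictly larger; this happens exactly when two of the edges $vu_1, vu_2, vu_3$ carry colours $a$ and $b$, so that the alternating structure passes through $v$ and merges two separate $G'$-components into one. To remedy this, we insert one or more preliminary Kempe changes in $G$ that sever the alternating path at $v$. Since $v$ has degree three, exactly one colour (say $c$) is missing at $v$, and a natural preliminary move is a $(c,\alpha)$-exchange on $vu_i$, where $\alpha$ is the current colour of $vu_i$; this is directly valid when $c$ is also missing at $u_i$. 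A careful case analysis over the possible colour patterns at $v$ and at the $u_i$'s, analogous in spirit to the argument used for the degree-3 case in Lemma \ref{lem:4nonregular}, shows that such a severing sequence always exists, sometimes by exchanging along a longer $(c,\beta)$-alternating path that itself passes through $v$.

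The principal obstacle lies in this case analysis: because $v$ has only a single missing colour, the room to manoeuvre at $v$ is very tight. In particular, when $c \in \{a,b\}$, or when the missing colour at the relevant $u_i$ refuses to match $c$, the naive single-edge recolouring at $v$ fails and one must follow a longer alternating path through $v$ or rely on a secondary Kempe change that first repositions the missing colour at $v$. Verifying that in every configuration at least one such workaround succeeds, while not disturbing the parts of $\sigma$ that have already been lifted, is the heart of the argument; once it is in place, Lemma \ref{lem:path} applied to the star at $v$ concludes the proof.
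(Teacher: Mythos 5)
Your reduction to the cubic case and the use of Lemma \ref{lem:3nonregular} for non-regular graphs match the paper, but from there your approach (delete a vertex $v$ and lift the Kempe sequence from $G\setminus v$) diverges from the paper's, and it contains two genuine gaps. First, the endgame is wrong: if after lifting the sequence the three edges at $v$ all disagree with $\psi$, the disagreement set is a star $K_{1,3}$, which is \emph{not} a union of vertex-disjoint paths, so Lemma \ref{lem:path} does not apply. In Lemma \ref{lem:3nonregular} this is fine because $\deg(v)\le 2$, and in Lemma \ref{lem:4nonregular} the degree-$3$ case is rescued by the fact that \emph{two} colours are missing at $v$; in the cubic, $4$-colour setting only one colour is missing at $v$, and you would need a separate (nontrivial) argument for the three-edge disagreement.

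Second, and more seriously, the ``severing'' step you defer to a case analysis is exactly where the approach breaks down. Say $v$ is missing colour $4$ and $vu_1,vu_2,vu_3$ are coloured $1,2,3$; a $(1,2)$-exchange in $G'$ ending at $u_1$ may run through $u_1vu_2$ in $G$. To break it with a move confined to the edges at $v$ you need colour $4$ missing at $u_1$ or at $u_2$; but each $u_i$ is cubic and missing only one colour, and nothing prevents both $u_1$ and $u_2$ from missing colour $3$ instead. Any workaround then requires an exchange along an alternating path that leaves the star at $v$ and recolours edges of $G'$, which destroys the correspondence with the sequence $\sigma$ obtained by induction (the preliminary exchanges in Lemmas \ref{lem:3nonregular} and \ref{lem:4nonregular} work precisely because they touch only edges incident with the deleted vertex). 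This tightness is why the paper abandons vertex deletion for cubic graphs: it instead deletes the edge set of a shortest (hence chordless) cycle $C$, inducts on the number of Kempe changes needed in $G\setminus E(C)$, and handles the base case with the elaborate ``balanced vertex'' / ``difference vertex'' / double-switch machinery along $C$. Your proposal as written identifies the right obstacle but does not overcome it, and the evidence suggests it cannot be overcome within the vertex-deletion framework.
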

\setcounter{theorem}{\value{tmptheorem}}

\begin{proof}
\setcounter{claim}{0}
If $\Delta\leq 2$ the claim is easy to verify, so we assume $\Delta=3$. Let $\phi$ and $\psi$ be 4-edge-colourings of a subcubic graph $G$. By Lemma \ref{lem:3nonregular}, we may assume that $G$ is in fact cubic. Let $C$ be a shortest cycle of $G$. For ease of notation, fix an orientation of $C$, and for every vertex $v$ on $C$, let $v_{\minus}$, $v_{\plus}$ and $v_{\circ}$ denote the neighbours of $v$ to the left on $C$, to the right on $C$, and off of $C$, respectively (note that $v_{\circ} \not\in V(C)$ since $C$ is chordless). Let $\phi_v^{\minus}:=\phi(v_{\minus}v)$,  $\phi_v^{\plus}:=\phi(vv_{\plus})$, and $\phi_v^{\circ}:=\phi(vv_{\circ})$, and similarly for $\psi.$

Consider the graph $G'$ obtained from $G$ by deleting the edges of $C$. Every component of $G'$ has a vertex of degree at most 2, so by Lemma \ref{lem:3nonregular} (and our above comments about the case $\Delta\leq 2$), we obtain $\phi |_{G'} \sim_4 \psi |_{G'}$. Let $t$ be the minimum number of Kempe changes in $G'$ required to transform  $\phi |_{G'}$ to $\psi |_{G'}$. We will show that $\phi \sim_{4} \psi$ by induction on $t$.

Suppose first that $t=0$. Then no Kempe changes are required to transform $\phi |_{G'}$ to $\psi |_{G'}$, that is, $\phi$ and $\psi$ agree on every edge of $G$, except possibly the edges of $C$. If at least one edge of $C$ has the same colour under $\phi$ and $\psi$, then we can complete the proof by applying Lemma \ref{lem:path}.  Hence, for every $v$ on $C$, we may assume that  $\phi_v^{\circ}=\psi_v^{\circ}$, $\phi_v^{\minus}\neq \psi_v^{\minus}$ and $\phi_v^{\plus}\neq\psi_v^{\plus}$.  Since we are only working with 4 colours, this immediately tells us that the sets $\{\phi_v^{\minus}, \psi_v^{\minus}\}$ and $\{\phi_v^{\plus}, \psi_v^{\plus}\}$ cannot be disjoint, i.e., they have at least one colour in common. If they have exactly one colour in common, i.e., if the two sets are different, we'll say that $v$ is a \emph{difference} vertex; let $D$ be the set of all difference vertices on $C$ (with respect to $\phi$ and $\psi$).

Our proof of the case $t=0$ now proceeds by induction on $|D|$. If $|D|=0$, then only a single pair of colours appear on $C$ under both $\psi$ and $\varphi$, and hence it is possible to do a $K$-change on $C$ so that the two colourings agree. It is also clear that $|D|\neq 1$, so we assume that $|D| \geq 2$. The following claim is central to our argument that $|D|$ can be reduced.

\begin{claim}\label{claim:balance}
  We may assume that for every vertex $v\in V(C)$, $\psi_v^{\minus}=\phi_v^{\plus}$.
\end{claim}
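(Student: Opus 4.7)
The plan is to prove the claim by contradiction. Suppose some vertex $v\in V(C)$ violates the stated equation, that is, $\psi_v^{\minus}\neq\phi_v^{\plus}$, and produce a short sequence of Kempe changes converting $\phi$ to a coloring $\phi'$ with $\phi\sim_4\phi'$ for which either (i) $\phi'$ already agrees with $\psi$ on some edge of $C$, so that Lemma~\ref{lem:path} finishes the ambient $t=0$ argument, or (ii) the number of violating vertices strictly decreases, allowing an induction on this count to close the claim.

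I begin by pinning down the local picture at $v$. The pairs $\{\phi_v^{\minus},\psi_v^{\minus}\}$ and $\{\phi_v^{\plus},\psi_v^{\plus}\}$ share at least one color; since $\phi_v^{\minus}\neq\psi_v^{\minus}$, $\phi_v^{\plus}\neq\psi_v^{\plus}$, and $\psi_v^{\minus}\neq\phi_v^{\plus}$ by assumption, the only remaining identification is $\phi_v^{\minus}=\psi_v^{\plus}$. Set $\alpha:=\phi_v^{\minus}=\psi_v^{\plus}$, $\beta:=\phi_v^{\plus}$, $\gamma:=\psi_v^{\minus}$, and $\delta:=\phi_v^{\circ}=\psi_v^{\circ}$; these are four distinct colors, with $\gamma$ missing at $v$ in $\phi$ and $\beta$ missing at $v$ in $\psi$.

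The proposed move is a $(\beta,\gamma)$-Kempe exchange in $\phi$ along the maximal alternating path $P_{\phi}$ starting at $v$ via the edge $vv_{\plus}$ (which has color $\beta$ and is well-defined as a starting edge because $\gamma$ is missing at $v$ in $\phi$). Let $\phi'$ denote the resulting coloring; by construction $\phi'(vv_{\plus})=\gamma=\psi_v^{\minus}$, so $v$ now satisfies the desired Case~B equation for the pair $(\phi',\psi)$. If $P_{\phi}$ happens to lie entirely on $C$, then $\phi'$ and $\psi$ still agree on $E(G)\setminus E(C)$; a brief analysis of how the swap changes the $\phi$-colors at the other vertices traversed by $P_{\phi}$, together with the fact that $\phi_u^{\circ}=\psi_u^{\circ}$ for every $u\in V(C)$, should then show that no new violators are created, so the inductive step closes.

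The principal obstacle is the case in which $P_{\phi}$ leaves $C$. Because $\phi$ and $\psi$ coincide on $E(G)\setminus E(C)$, their $(\beta,\gamma)$-subgraphs agree off $C$; therefore, outside $C$, the path $P_{\phi}$ coincides with the analogous $(\beta,\gamma)$-alternating path $P_{\psi}$ in $\psi$, which starts at $v$ via the $\gamma$-edge $v_{\minus}v$. I would exploit this coincidence, together with the facts that $G$ is cubic and that $C$ is a shortest (hence chordless) cycle, to carefully track the re-entries of $P_{\phi}$ and $P_{\psi}$ into $C$. The expected outcomes are: either the re-entry structure forces $\phi'$ to agree with $\psi$ on some edge of $C$ (so Lemma~\ref{lem:path} finishes the $t=0$ argument), or a companion $(\beta,\gamma)$-exchange in $\psi$ along $P_{\psi}$ yields $\psi'\sim_4\psi$ such that $\phi'|_{G'}=\psi'|_{G'}$ holds again and $(\phi',\psi')$ has strictly fewer violators than $(\phi,\psi)$. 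Making this case analysis airtight, in particular controlling how the two paths interact with $C$ under chordlessness, is where I expect the bulk of the technical work to lie.
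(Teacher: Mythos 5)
Your proposal takes a genuinely different route from the paper, and it has a real gap at exactly the point you flag. The paper performs no Kempe change to establish this claim: it observes that between two consecutive difference vertices $x,y$ the cycle edges alternate in a fixed pair of colours $\{1,2\}$ under both $\phi$ and $\psi$, so that if the ``types'' of $x$ and $y$ disagreed, then $xx_{\plus}\cdots y$ would be a \emph{maximal} $(1,2)$-alternating path lying entirely on $C$ (maximality at $x$ and $y$ uses $\phi^{\circ}=\psi^{\circ}$ to rule out the path escaping through $x_\circ$ or $y_\circ$), and exchanging along it would make the two colourings agree on an edge of $C$, finishing the whole $t=0$ case via Lemma~\ref{lem:path}. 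Propagating this around $C$ shows all difference vertices have the same type, and the claim is then a pure ``without loss of generality'' (interchange $\phi$ and $\psi$, or reverse the orientation of $C$). In particular, either every difference vertex is a violator in your sense or none is, so an induction that removes violators one at a time is not the right frame for this statement.

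The concrete gap in your argument is the case where $P_{\phi}$ leaves $C$, which you acknowledge but do not resolve, and your proposed repair rests on an incorrect assertion: although the $(\beta,\gamma)$-subgraphs of $\phi$ and $\psi$ coincide off $C$, the paths $P_{\phi}$ and $P_{\psi}$ start from $v$ in opposite directions along $C$ and may first leave $C$ at different vertices, entering different components of that common off-$C$ subgraph, so they need not coincide anywhere off $C$. Swapping along $P_{\phi}$ then destroys the agreement of the two colourings off $C$, and no companion exchange in $\psi$ is guaranteed to restore it. Even in the case $P_{\phi}\subseteq C$, the assertion that no new violators are created is left unverified (after the $\beta\leftrightarrow\gamma$ swap the type of the far endpoint of $P_\phi$ can change). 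As written, the proposal is not a proof.
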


\begin{proofc}
Consider a pair of difference vertices $x \neq y$ on $C$ such that the path $xx_{\plus} \cdots y$ contains no other difference vertices. Then the edges of this path alternate in colour under $\phi$ (say in colours 1 and 2), and also alternate in 1 and 2 under $\psi$ (with every edge receiving different colours under $\phi$ and $\psi$). Since $x$ and $y$ are difference vertices, we know that exactly one of $\phi_x^{\minus}, \psi_x^{\minus}$ is not 1 or  2, and similarly for $\phi_y^{\plus}, \psi_y^{\plus}$. We claim  that if $\phi_x^{\minus}$ is not 1 or 2, then $\phi_y^{\plus}$ must be 1 or 2, and if $\psi_x^{\minus}$ is not 1 or 2, then $\psi_y^{\plus}$ must be 1 or 2. This is because otherwise, we get that $xx_{\plus}\cdots y$ is a maximal $(1,2)$-alternating path under either $\phi$ or $\psi$, and by the $(1,2)$-exchange along this path we get the two colourings to agree on at least one edge of $C$. So, either $\psi_x^{\minus}=\phi_x^{\plus}$ and $\psi_y^{\minus}=\phi_y^{\plus}$ (in the case when $\phi_x^{\minus} \neq 1,2$), or $\phi_x^{\minus}=\psi_x^{\plus}$ and $\phi_y^{\minus}=\psi_y^{\plus}$ (in the case when $\psi_x^{\minus}\neq 1,2$). Of course, for every vertex $v\neq x,y$ in $xx_{\plus} \cdots y$, both of these properties hold. Moreover, we may repeat this argument for $yy_{\plus} \cdots z$, where $z$ is the next difference vertex to the right of $y$, and so on. So, we may assume that we either have $\psi_v^{\minus}=\phi_v^{\plus}$ for all $v \in V(C)$, or $\phi_v^{\minus}=\psi_v^{\plus}$ for all $v \in V(C)$. Without loss of generality we assume that the former is true.
\end{proofc}

We refer to the above property by saying that each  vertex $v\in V(C)$ is \emph{balanced}.
Note that, since $\phi^\circ_v=\psi^\circ_v\neq\psi^{\plus}_v=\psi^{\minus}_{v_{\plus}}$ and $v_{\plus}$ is balanced for any $v\in V(G)$, we have $\phi^\circ_v\neq\phi^{\plus}_{v_{\plus}}$ and, therefore, $v_\circ vv_{\plus}v_{\plus\plus}$ cannot be an alternating path under $\phi$. Similarly, $v_\circ vv_{\minus}v_{\minus\minus}$ cannot be an alternating path under $\psi$, because $v_{\minus}$ is balanced. This leads to some additional information regarding missing colours on $C$.

\begin{claim}\label{claim:pathbalance}
  The colour $\phi_v^{\minus}$ cannot be missing at $v_{\plus}$ in $\phi$ for any $v\in V(C)$, nor can the colour $\psi_v^{\plus}$ be missing at $v_{\minus}$ in $\psi$ for any $v\in V(C)$.
\end{claim}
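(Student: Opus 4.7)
I handle the first assertion; the second follows from the first by the symmetry that simultaneously exchanges $\phi\leftrightarrow\psi$ and reverses the orientation of $C$, which preserves the balance condition $\psi_v^{\minus}=\phi_v^{\plus}$ and the non-alternating observations preceding the claim. For the first assertion, the strategy is to show that if $\phi_v^{\minus}$ were missing at $v_{\plus}$ in $\phi$, then $\phi\sim_4\psi$ could be derived outright, contradicting the standing assumption that we are still in an unresolved case of the inner induction on $|D|$.

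Set $a:=\phi_v^{\minus}$ and $b:=\phi_v^{\plus}$, and suppose $a$ is missing at $v_{\plus}$. Then $v_{\minus}vv_{\plus}$ is an $(a,b)$-alternating path in $\phi$ that cannot extend past $v_{\plus}$, so the maximal $(a,b)$-alternating component $Q$ containing $v_{\minus}v$ is a path with $v_{\plus}$ as one endpoint. Apply the $(a,b)$-Kempe exchange on $Q$ to obtain $\phi^*\sim_4\phi$. Under $\phi^*$, the edge $v_{\minus}v$ is recoloured $b$, which by Claim~\ref{claim:balance} equals $\psi_v^{\minus}$; so $\phi^*$ and $\psi$ agree on this edge of $C$.

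The aim is then to conclude $\phi^*\sim_4\psi$ via Lemma~\ref{lem:path}, which gives $\phi\sim_4\psi$. Since $t=0$, $\phi$ and $\psi$ originally disagreed exactly on $E(C)$. The swap removes $v_{\minus}v$ from this disagreement set, keeps $vv_{\plus}$ in it (because $\psi_v^{\plus}=\phi_{v_{\plus}}^{\plus}\neq a$, since $a$ is absent at $v_{\plus}$), and adds precisely the off-$C$ edges of $Q$ (which previously agreed, as $\phi|_{G'}=\psi|_{G'}$). In the generic case where $Q$ leaves $C$ at $v_{\minus}$ and stays off $C$ until terminating at some vertex $u\notin V(C)$, the new disagreement set is a single path: it runs along $C\setminus\{v_{\minus}v\}$ from $v$ around to $v_{\minus}$, and then along the off-$C$ tail of $Q$ to $u$. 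Checking that every vertex has disagreement-degree at most two is routine, and Lemma~\ref{lem:path} then applies.

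The principal obstacle is handling the exceptional configurations in which $Q$ instead continues along $C$ past $v_{\minus}$ (which happens precisely when $\phi_{v_{\minus}}^{\minus}=b$), or in which the off-$C$ portion of $Q$ re-enters $C$ at some vertex other than $v_{\minus}$. In these situations additional $C$-edges are toggled by the swap, and one must verify that they either merge cleanly with the remaining disagreement into vertex-disjoint paths or are converted into agreements. I expect that a short case analysis, using that $C$ is a shortest cycle of $G$ (which restricts how $Q$ can re-enter $C$ through short detours in $G'$) together with the balance property established in Claim~\ref{claim:balance}, resolves each configuration so that Lemma~\ref{lem:path} can be applied in every case, completing the argument.
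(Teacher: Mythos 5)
Your opening move matches the paper's: exchange along the maximal $(a,b)$-alternating path $Q$ through $v_{\minus}v$ (equivalently, ending at $v_{\plus}$), observe that balance turns $v_{\minus}v$ into an edge of agreement, and finish with Lemma~\ref{lem:path}. The gap is in your split into ``generic'' and ``exceptional'' cases. The whole point of the remark the paper places just before this claim --- a direct consequence of Claim~\ref{claim:balance} --- is that $Q$ \emph{cannot leave $C$ at all}: for every $u\in V(C)$ one has $\phi_u^{\circ}=\psi_u^{\circ}\neq\psi_u^{\plus}=\psi_{u_{\plus}}^{\minus}=\phi_{u_{\plus}}^{\plus}$, so $u_{\circ}uu_{\plus}u_{\plus\plus}$ is never a $\phi$-alternating path; since $Q$ runs along $C$ in the minus direction starting with $v_{\plus}vv_{\minus}$, it can never turn off of $C$ at any vertex it reaches. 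Thus your ``generic case'' ($Q$ exiting $C$ at $v_{\minus}$) is vacuous, while the one case that actually occurs ($Q$ continuing along $C$) is exactly the one you defer to an unspecified case analysis. That deferral is a real gap, and the tool you propose for it is the wrong one: if $Q$ could leave $C$ and re-enter at some vertex $u$, then after the swap $u$ could be incident with three disagreement edges (the newly toggled edge $uu_{\circ}$ together with both cycle edges at $u$), so the disagreement set would fail the hypothesis of Lemma~\ref{lem:path}; and the minimality of $C$ does not by itself forbid re-entry via a long detour through $G'$. It is the balance property, not girth, that kills these configurations.

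Once you know $E(Q)\subseteq E(C)$, no further case analysis is needed even when $Q$ runs on past $v_{\minus}$: after the swap the two colourings still agree off $C$ and now agree on $v_{\minus}v$, so the disagreement set is contained in $E(C)\setminus\{v_{\minus}v\}$, which is automatically a disjoint union of subpaths of $C$, and Lemma~\ref{lem:path} applies at once. (Your reduction of the second assertion to the first by the $\phi\leftrightarrow\psi$, orientation-reversing symmetry is fine and is what the paper means by ``a parallel argument.'')
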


\begin{proofc}
If $\phi_v^{\minus}$ is missing at $v_{\plus}$ in $\phi$ for some $v\in V(C)$, then there is a maximal $(\phi_v^{\minus}, \phi_v^{\plus})$-alternating path in $\phi$ beginning at $v_{\plus}$. By the above comment, this path cannot leave $C$. Hence, we may make an exchange along this path. Since $v$ was balanced, this exchange will result in the two colourings agreeing on $v_{\minus}v$. A parallel argument shows that we can get agreement on $vv_{\plus}$ if $\psi_v^{\plus}$ is missing at $v_{\minus}$ in $\psi$ for any $v\in V(C)$.
\end{proofc}

Claims \ref{claim:balance} and \ref{claim:pathbalance} allow us to prove the following.

\begin{claim}\label{claim:doubleswitch}
  For every vertex $v \in V(C)$, there exist $a,b\in\{1,2,3,4\}$ such that $v_{\circ} v v^{\plus}$ is a maximal $(a,b)$-alternating path under $\phi$, and  $v_{\circ}v v_{\minus}$ is a maximal $(a,b)$-alternating path under $\psi$ (see Figure \ref{fig:doubleswitch}).
\end{claim}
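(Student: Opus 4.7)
The natural (indeed forced) choice of colour pair is $a:=\phi_v^{\circ}=\psi_v^{\circ}$ and $b:=\phi_v^{\plus}=\psi_v^{\minus}$, where the second equality is Claim~\ref{claim:balance}. Since $\phi$ and $\psi$ are proper edge-colourings we have $a\neq b$, and the short paths $v_{\circ}vv_{\plus}$ (in $\phi$) and $v_{\circ}vv_{\minus}$ (in $\psi$) automatically carry the required $(a,b)$-alternating colour pattern. The substance of the claim therefore reduces to verifying maximality at each of the four endpoints.

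Maximality at the \emph{outer} endpoints is essentially what the preceding set-up was designed for. At $v_{\plus}$ the $\phi$-path would extend only via an $a$-edge at $v_{\plus}$ distinct from $vv_{\plus}$; the observation immediately preceding Claim~\ref{claim:pathbalance} already rules out $\phi_{v_{\plus}}^{\plus}=a$, so the only remaining possibility is $\phi_{v_{\plus}}^{\circ}=a$. I would exclude this by applying Claim~\ref{claim:pathbalance} at $v$, which forces $\phi_v^{\minus}\in\{\phi_{v_{\plus}}^{\circ},\phi_{v_{\plus}}^{\plus}\}$, together with the balance identities to pin down the remaining colours at and around $v_{\plus}$, and then exhibiting a short alternating path in $\phi$ whose exchange equalises $\phi$ and $\psi$ on at least one edge of $C$, contradicting the minimality of $|D|$. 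The corresponding statement for the $\psi$-path at $v_{\minus}$ follows by the symmetric argument with the roles of $\phi$ and $\psi$ (and of $\plus$ and $\minus$) interchanged.

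The remaining potential failure is at the shared \emph{inner} endpoint $v_{\circ}$, where maximality breaks iff $v_{\circ}$ has some incident edge of colour $b$. Since $v_{\circ}\notin V(C)$, any such edge lies in $G'$ and thus has the same colour under $\phi$ and $\psi$, so it would extend both the $\phi$-path and the $\psi$-path identically into $G'$. I expect this scenario to be the main obstacle: one must trace the merged $(a,b)$-alternating component into $G'$ and, using balance at every vertex of $C$ that this component visits (together with Claim~\ref{claim:pathbalance} at those vertices), identify a pair of matching Kempe exchanges that can be performed on $\phi$ and on $\psi$ simultaneously so as to restore agreement on all of $G'$ (thereby keeping $t=0$) while strictly decreasing $|D|$, again contradicting minimality.
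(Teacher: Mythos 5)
Your setup is correct and matches the paper's: the colour pair is forced to be $a=\phi_v^{\circ}=\psi_v^{\circ}$ and $b=\phi_v^{\plus}=\psi_v^{\minus}$, and the claim reduces to showing that $a$ is missing at $v_{\plus}$ in $\phi$, at $v_{\minus}$ in $\psi$, and that $b$ is missing at $v_{\circ}$ in both colourings. Your plan for the outer endpoints is essentially the paper's argument read contrapositively: the remark before Claim \ref{claim:pathbalance} rules out $\phi_{v_{\plus}}^{\plus}=a$, and the possibility $\phi_{v_{\plus}}^{\circ}=a$ is eliminated by accounting for all three colours at the degree-three vertex $v_{\plus}$ ($b$ on $vv_{\plus}$; $c:=\psi_v^{\plus}$ on $v_{\plus}v_{\plus\plus}$ by balance at $v_{\plus}$; the colour of $\{3,4\}\setminus\{c\}$ present since it is missing at $v$ and recolouring $vv_{\plus}$ with it would unbalance $v$; and $\phi_v^{\minus}$ present by Claim \ref{claim:pathbalance} when $c=4$), which pins the unique missing colour at $v_{\plus}$ to be $a$. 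That part, though only sketched, is sound.

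The genuine gap is at $v_{\circ}$, which you yourself identify as ``the main obstacle'' and then leave as a programme rather than a proof --- and the programme you propose is not one that can be carried out as stated. Following the merged $(a,b)$-alternating component from $v_{\circ}$ into $G'$ gives you no control: that component may wander through $G'$, re-enter $C$ at other vertices (where $\phi$ and $\psi$ diverge), and there is no reason the two switches you hope for would leave the colourings agreeing off $C$ or decrease $|D|$. The idea you are missing is to work in the complementary colours instead: if $b$ is present at $v_{\circ}$, then since $\deg(v_{\circ})=3$ and all edges at $v_{\circ}$ other than $vv_{\circ}$ lie off $C$, some $d\in\{3,4\}$ is missing at $v_{\circ}$ in both colourings. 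One then takes the maximal $(a,d)$-alternating paths starting at $v_{\circ}$ in $\phi$ and in $\psi$; these begin with the edge $v_{\circ}v$ and so are immediately forced onto $C$, and balance together with Claim \ref{claim:pathbalance} and the already-established fact that $a$ is missing at $v_{\plus}$ in $\phi$ and at $v_{\minus}$ in $\psi$ shows they never leave $C$ again. Switching both then preserves agreement off $C$ but, after a case analysis on whether $v$ is a difference vertex and on whether the two paths share edges (one subcase requiring a preliminary double-switch at $v_{\minus}$), always produces an unbalanced vertex or outright agreement on an edge of $C$ --- the desired contradiction. That analysis is the real content of the claim and is absent from your proposal.
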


\begin{figure}[htbp]
  \centering
  \includegraphics[height=2.5cm]{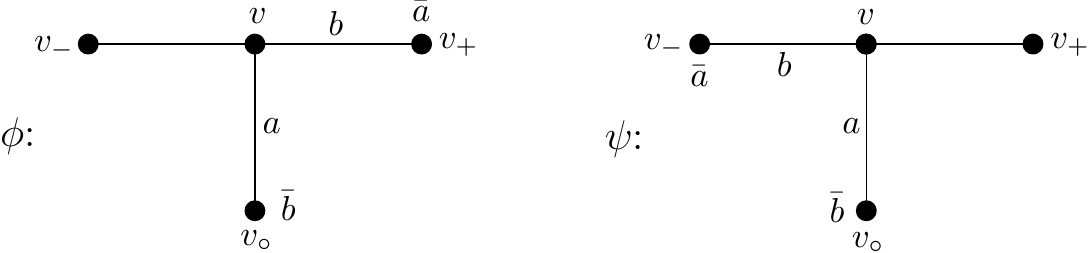}
  \caption{A pictorial representation of Claim \ref{claim:doubleswitch}}
  \label{fig:doubleswitch}
\end{figure}

Before we prove Claim \ref{claim:doubleswitch}, let us see how it will allow us to reduce the number of difference vertices on $C$, and hence complete the proof of the case $t=0$. For any vertex $v$ on $C$, call the operation of simultaneously switching along $v_{\circ} v v_{\plus}$ in $\phi$, and along $v_{\circ} v v_{\minus}$ in $\psi$, a \emph{$v$-double-switch}. Note that after a $v$-double switch, the two colourings still agree everywhere off of $C$ and nowhere on $C$ (and all vertices on $C$ remain balanced). Moreover, note that $v$ will be a member of $D$ after the double-switch if and only if it was a member before the double-switch. On the other hand, note that each of $v_{\minus}$ and $v_{\plus}$ will change their memberships in $D$ after the double-switch.
We claim that there is a path $v_1 \cdots v_r$ of even length on $C$ such that $v_1, v_r$ are two difference vertices. If $|D|\geq 3$ this is clearly true. If $|D|=2$ and if $v_1 \cdots v_r$ is a path on $C$ connecting the only two difference vertices $v_1,v_r$, then we have $\phi^{\plus}_{v_1}\neq\phi^{\minus}_{v_r}$ since every vertex on $C$ is balanced. Hence $r$ is even. In both cases, a double-switch at all $v_i$ with $i$ odd makes $v_0$ and $v_{r}$ non-difference vertices without changing the membership in $D$ of any other vertex on $C$. This reduces the number of difference vertices by two. Hence, by induction on $|D|$, Claim \ref{claim:doubleswitch} will complete our proof of the case $t=0$.

\begin{proofcn}{claim:doubleswitch}
Choose any $v \in V(C)$. We already know that there exist $a,b\in\{1,2,3,4\}$ such that $\phi_v^{\circ}=\psi_v^{\circ}=a$, and $\psi_v^{\minus}=\phi_v^{\plus}=b$, since $v$ is balanced. Without loss of generality let us assume that $a=1$ and $b=2$, and further that $\phi_{v}^{\minus}=3$ and $\psi_v^{\plus}=c$, where $c$ is either 3 or 4, depending on whether or not $v$ is a difference vertex (see Figure \ref{fig:xcolours}). We must show that $v_{\plus}$ is missing colour $1$ in $\phi$, $v_{\minus}$ is missing colour $1$ in $\psi$, and $v_{\circ}$ is missing colour 2 in both colourings.

\begin{figure}[htbp]
  \centering
  \includegraphics[height=2.5cm]{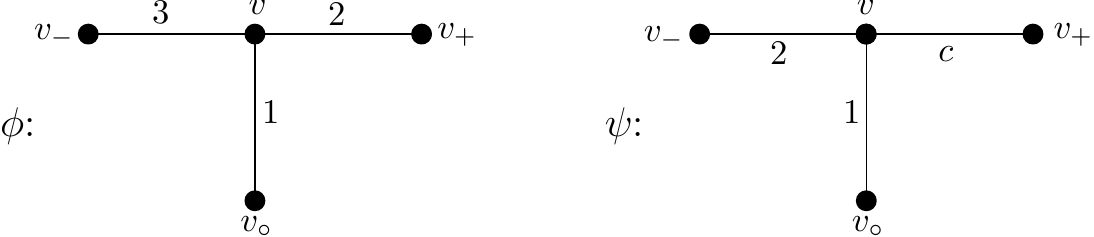}
  \caption{The neighbourhood of $v$ under $\phi$ and $\psi$ in the proof of Claim \ref{claim:doubleswitch}}
  \label{fig:xcolours}
\end{figure}

Note that since $v_{\minus}$  and $v_{\plus}$ are both balanced, 3 cannot be missing at $v_{\minus}$ under $\psi$, nor can $c$ be missing at $v_{\plus}$ under $\phi$. Since 4 is missing at $v$ under $\phi$, we know that 4 cannot be missing at $v_{\plus}$ under $\phi$, as otherwise we could recolour the edge $vv_{\plus}$ in $\phi$ and cause $v$ to be unbalanced. Similarly, since the colour in $\{3,4\}\setminus \{c\}$ is missing at $v$ under $\psi$, this colour cannot be missing at $v_{\minus}$ under $\psi$, as otherwise we could recolour the edge $v_{\minus}v$ in $\psi$ and cause $v$ to be unbalanced. Hence, if $c=3$, then we have succeeded in showing that $1$ is missing at $v_{\plus}$ in $\phi$ and at $v_{\minus}$ in $\psi$. If $c=4$, then to get this conclusion it remains to show that $3$ cannot be missing at $v_{\plus}$ in $\phi$, and $4$ cannot be missing at $v_{\minus}$ in $\psi$.  However, this case is handled by Claim \ref{claim:pathbalance}. Hence we have shown that $v_{\plus}$ is missing colour $1$ in $\phi$, and $v_{\minus}$ is missing colour $1$ in $\psi$.

Suppose now that $d \in \{3,4\}$ is missing at $v_{\circ}$ (in both colourings). Consider the maximal $(1,d)$-alternating paths beginning at $v_{\circ}$ in both $\phi$ and $\psi$. We claim that none of these paths have any edges off of $C$, except for the edge $vv_{\circ}$ itself. In $\phi$, if $d=4$ the path in question ends at $v$, and similarly, in $\psi$, if $d\neq c$ then the path in question ends at $v$. Note that since $v_{\minus}$ is missing the colour $1$ in $\psi$, the edge $v_{\minus}v_{\minus\circ}$ cannot be coloured 1 in either $\psi$ or $\phi$. So in $\phi$, if $d=3$ and the path in question does not end at $v_{\minus}$, then the edge $v_{\minus}v_{\minus\minus}$ has colour 1 under $\phi$, and (because $v_{\minus}$ is balanced) colour 3 under $\psi$. Since the next difference vertex to the left of $v_{\minus}$ must be balanced, we know that either colour 1 or 3 will be missing at this point under $\phi$, and hence the path ends here. We can argue similarly in $\psi$ for the case $d=c$, using the fact that $v_{\plus}$ is missing the colour $1$ in $\phi$. Hence, regardless of the value of $d$, we can simultaneously switch that maximal $(1,d)$-alternating path beginning at $v_{\circ}$ in both $\phi$ and $\psi$, and still get that the two colourings agree on every edge off of $C$.

If $v$ is a difference vertex (i.e., if $c=4$), then of the two maximal $(1,d)$-alternating paths starting at $v_{\circ}$ in $\phi$ and $\psi$, exactly one ends at $v$, and the other one ends at a
vertex $v' \neq v$ on $C$ (as shown above). This implies that after making the two switches induced by these paths, the vertex $v'$ will be unbalanced. This contradiction implies that when $v$ is a difference vertex, $v_{\circ}$ is indeed missing the colour $2$.

If $v$ is not a difference vertex (i.e., if $c=3$), then the maximal $(1,d)$-alternating paths starting at $v_{\circ}$ in $\phi$ and $\psi$ either both end at $v$ (if $d=4$), or they end at vertices $v_{\phi}$, $v_{\psi}$ on $C$, respectively (if $d=3$). If both paths end at $v$, then after switching on both paths the new colour $4$ of $vv_{\circ}$ is not missing at $v_{\minus}$ in $\psi$ (or $v_{\plus}$ in $\phi$), contradicting our argument above.  So, we may assume that $d=3$. It is clear that $v _{\phi}\neq v_{\psi}$.  If the two paths have no edges in common on $C$, then after making both switches, neither $v_{\phi}$ nor $v_{\psi}$ will be balanced. If, on the other hand, the two paths  have common edges then, since $v_{\phi}$ and $v_{\psi}$ are balanced, we get $v_{\phi}=v_{\plus}$, $v_{\psi}=v_{\minus}$, and $v_{\minus}$ and $v_{\plus}$ are both difference vertices. This implies, in particular, that $v_{\minus}$ is a difference vertex. So, by our argument above, we may perform a double-switch at $v_{\minus}$. After this double-switch, we get that $3$ is missing at $v$ under the modified colouring $\phi$, so the maximal $(1,3)$-alternating path beginning at $v_{\circ}$ now ends at $v$. As we have already discussed this case, we get our desired result. This completes the proof of Claim \ref{claim:doubleswitch} and hence we complete the proof of the base case when $t=0$.
\end{proofcn}

We may now assume that $t\geq 1$. Then there is a nonempty series of 4-edge-colourings $\phi_0, \phi_1, \ldots, \phi_t$ of $G'$ such that $\phi_0=\phi |_{G'}$, $\phi_t= \psi |_{G'}$, and  $\phi_i$ and $\phi_{i+1}$ differ by a single Kempe change, for all $i \in \{0, 1, \ldots, t-1\}$. Consider the 4-edge-colouring $\phi_{1}$ of $G'$. Since only $t-1$ Kempe changes are required to transform $\phi _1$ to $\psi |_{G'}$ in $G'$, we know by induction that $\varphi \sim_{4} \psi$ for $\varphi$ any 4-edge-colouring of $G$ with $\varphi |_{G'}=\phi_1$. Thus it suffices to show that there exists such a $\varphi$ where $\phi \sim_{4} \varphi$.

Consider the Kempe change which transforms $\phi |_{G'}$ to $\phi_1$ in $G'$. Let $H'$ be the maximal alternating component in $\phi |_{G'}$ corresponding to this exchange, and let $H$ be the maximal alternating component of $\phi$ containing $H'$.  If  $E(H) \setminus E(H') \subseteq E(C)$ then we get our desired $\varphi$ from $\phi$ simply by swapping along $H$.  So, suppose that this is not the case. Then $H'$ is a path, and $H$ contains a path $w_{\circ} w w_{\plus} \cdots x x_{\circ}$ for some pair of vertices $w, x$ on $C$, where one of $w$ or $x$ is an endpoint of $H'$ in $G'$. The second endpoint of $H'$ may or may not be on $C$, and there may or may not be a segment $w'_{\circ} w' w'_{\plus} \cdots x' x'_{\circ}$ of $H$ where $w'$ or $x'$ is this second endpoint. We will first show how to resolve the case where the second endpoint is not on $C$, and then show how our argument can be extended to resolve the other cases as well.

Suppose, without loss of generality, that $H$ is an alternating $(1,2)$-component in~$\phi$. Then, it is easy to see that  there exists  a maximal $(1,2)$- or $(3,4)$-alternating path $y y_{\plus} \cdots z$ on $C$ that is edge-disjoint from $w w_{\plus} \cdots x$. In fact, we can make the following more general observation that will later be of use.

\begin{obs}\label{obs:pathbetween}
  Let $u$ and $v$ be vertices on $C$ and let $a=\phi_u^{\minus}$ and $b=\phi_u^{\circ}$ ($a,b \in \{1, 2, 3, 4\}$). Suppose that $\{\phi_v^{\circ}, \phi_v^{\plus}\}$ is either this same pair of colours $\{a,b\}$ or their complement $\{c,d\}=\{1,2,3,4\}\setminus\{a,b\}$. Then there exists a maximal $(a,b)$- or $(c,d)$-alternating path $p p_{\plus} \cdots q$ in $\phi$ that lies entirely on $C$ and is edge-disjoint from $v v_{\plus}\cdots u$ (cf. Figure \ref{fig:pathbetween}).
\end{obs}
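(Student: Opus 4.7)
The plan is to find the desired path as a maximal same-type run of edges along the positive arc from $u$ to $v$. Write this arc as $P = w_0 w_1 \cdots w_m$ with $w_0 = u$ and $w_m = v$, and set $e_i = w_{i-1} w_i$. Call $e_i$ of \emph{type $Y$} if $\phi(e_i) \in \{c,d\}$ and of \emph{type $X$} if $\phi(e_i) \in \{a,b\}$. Since $G$ is cubic and the other two edges at $u$ are coloured $a, b$, the edge $e_1 = uu_{\plus}$ is type $Y$; by the same reasoning applied to $v$, the edge $e_m$ is type $Y$ in Case A (when $\{\phi_v^{\circ}, \phi_v^{\plus}\} = \{a,b\}$) and type $X$ in Case B. Partition $P$ into maximal same-type runs $B_1, \ldots, B_s$. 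Within each block, consecutive edges share a vertex, so alternate between the two colours of that type; a type-$Y$ block is a $(c,d)$-alternating path and a type-$X$ block is an $(a,b)$-alternating path. In particular, $B_1$ is type $Y$, and $B_s$ is type $Y$ (Case A) or type $X$ (Case B). It suffices to show that some $B_r$ is a maximal alternating path in $\phi$, as it then lies on $C$ and is edge-disjoint from the other arc $vv_{\plus} \cdots u$.

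The endpoint analysis at $u$ and $v$ is immediate: at $u$ the non-arc edges $u_{\minus} u, uu_{\circ}$ have colours $a, b$, of opposite type to $B_1$, so $B_1$ cannot extend past $u$; at $v$ the non-arc edges $vv_{\circ}, vv_{\plus}$ have colours of opposite type to $B_s$ by the case hypothesis, so $B_s$ cannot extend past $v$. The key intermediate step is the following \emph{propagation}: suppose $B_r$ extends off-cycle at its right endpoint $w_{i_r}$ (for $1 \le r < s$); then the off-cycle edge $w_{i_r} w_{i_r}^{\circ}$ must have colour of the type of $B_r$, which is the opposite type of $B_{r+1}$. Combined with $e_{i_r} \in B_r$ being of opposite type to $B_{r+1}$, both non-arc edges at $w_{i_r}$ are of opposite type to $B_{r+1}$, so $B_{r+1}$ cannot extend at its left endpoint.

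Now suppose for contradiction that no $B_r$ is maximal. By induction on $r$, no $B_r$ extends at its left endpoint: the base case $r = 1$ is the $u$-analysis, and for the inductive step $B_{r-1}$ is non-maximal and has its left end blocked, hence extends at its right end, so by propagation the left end of $B_r$ is also blocked. Applying this to $r = s$, $B_s$ is blocked at its left end, and the $v$-analysis blocks it at its right end; thus $B_s$ is maximal, contradicting the supposition. Hence some $B_r$ is the required maximal alternating path lying entirely on $C$. I expect the propagation step to be the main obstacle, since it relies crucially on the fact that non-maximality of $B_r$ at its right endpoint pins the off-cycle colour there to the type of $B_r$, and this is exactly what is needed to block $B_{r+1}$ at the shared vertex.
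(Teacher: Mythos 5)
Your proof is correct: the decomposition of the arc $uu_{\plus}\cdots v$ into maximal same-type blocks, the observation that the first and last blocks are blocked at $u$ and $v$ respectively by the colour hypotheses, and the propagation step (non-maximality of $B_r$ at its right end forces the off-cycle edge at the shared vertex to have $B_r$'s type, blocking $B_{r+1}$ on the left) together yield a sound contradiction argument, and I see no gap. The paper itself offers no proof of this Observation, dismissing it with ``it is easy to see,'' so your write-up is simply a careful, complete version of the argument the authors had in mind.
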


\begin{figure}[htbp]
  \centering
  \includegraphics[height=2.2cm]{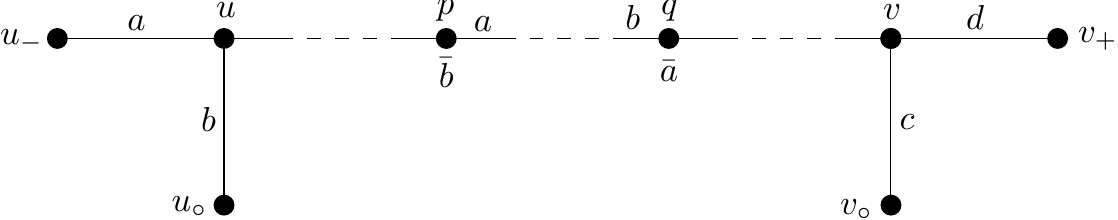}
  \caption{An instance of Observation \ref{obs:pathbetween}}
  \label{fig:pathbetween}
\end{figure}

Our argument will now focus exclusively on the segment $w w_{\plus} \cdots x\cdots y y_{\plus} \cdots z$ of $C$. We shall show, by induction on the distance between $x$ and $y$, that we can resolve the present case by making $K$-changes involving only the edges in this segment. First, note that every vertex in $w w_{\plus} \cdots x$  is missing one of $3$ or $4$ under $\phi$. If any adjacent pair is missing the same colour, then by simply recolouring the edge between them we get our desired result. Hence, we may assume that each adjacent pair of vertices in this path are missing different colours. If $x=y$, then making the $K$-change along the path $y y_{\plus} \cdots z$ will cause $x_{\minus}$ and $x$ to be missing the same colour. Hence, we may assume that $x\neq y$.

Assume, without loss of generality, that $\phi_x^{\minus}=1$ and that colour 3 is missing at $x_{\minus}$. Let $L$ be the maximal $(1,3)$-alternating path beginning at $x_{\minus}$. Note that, by possibly switching along $y y_{\plus} \cdots z$, we can ensure that $\{\phi_y^{\circ}, \phi_y^{\plus}\}=\{1,3\}$ or $\{2,4\}$. Then, the last vertex of $L$ on $C$, say $u$, must be to the left of $y$. If $L$ does not leave $C$, then we may switch along $L$. After this switch, the maximal $(1,2)$-alternating component containing the edge $ww^{\circ}$ has $x_{\minus}$ as an endpoint. On the other hand, the maximal $(1,2)$-alternating component containing $xx_{\circ}$ also contains $xx_{\plus}$. So, the only way that we do not have our desired result is if the endpoint of $H'$ is $x$, and there exists a vertex $\bar{x}$ on $C$ such a that $x_{\circ} x x_{\plus} \cdots \bar{x} \bar{x}_{\circ}$ is a $(1,2)$-alternating path. However, in this scenario, we get that the distance between $\bar{x}$ and $y$ is less than between $x$ and $y$, hence we complete the proof by induction. So, we may assume that $L$ leaves $C$. But then, by Observation \ref{obs:pathbetween}, there exists a maximal $(1,3)$- or $(2,4)$-alternating path $p p_{\plus} \cdots q$ on $C$ which is edge-disjoint from $y y_{\plus} \cdots x$. Now, by possibly switching on $p p_{\plus} \cdots q$, we can ensure that $\{\phi_p^{\circ}, \phi_p^{\plus}\}=\{1,2\}$ or $\{3,4\}$. Consequently, Observation \ref{obs:pathbetween} guarantees a maximal $(1,2)$- or $(3,4)$-alternating path on $C$ which is edge-disjoint from $p p_{\plus} \cdots x$. As this path is closer to $x$ than $y y_{\plus} \cdots z$ is, we complete the inductive step.

Note that our above proof works identically well if instead of concentrating on the segment $w w_{\plus} \cdots x\cdots y y_{\plus} \cdots z$ of $C$, we concentrated on the segment $y y_{\plus} \cdots z \cdots \allowbreak w w_{\plus} \ldots x$.  The only adjustment that would need to be made would be to interchange the terms left and right in our argument. This symmetry is important for dealing with our remaining cases.

Suppose now that the second endpoint $v$ of $H'$ is on $C$. We first deal with the case that $v$ is not $w'$ or $x'$ for any $w'_\circ w' w'_{\plus} \cdots x' x'_\circ$ that is a part of $H$. Choose a maximal $(1,2)$- or $(3,4)$-alternating path on $C$ that is edge-disjoint from $x x_{\plus} \cdots w$, as guaranteed by Observation \ref{obs:pathbetween}. There are two segments of $C$ to which we may apply our symmetric proof --- if one of these does not contain $v$, then simply apply the proof to this segment. If both these segments of $C$ contain $v$, then $v$ must occur on the maximal alternating path. If it is an endpoint of the path, then $v$ is not an endpoint of $H$, and we may apply our proof to the segment of $C$ that ends at $v$. Hence, we may assume that $v$ is somewhere in the middle of our maximal alternating path. Then, $v$ must be an endpoint of $H$, missing either 1 or 2. Looking at our above argument however, we can see that none of the exchanges we might do would change that fact that 1 or 2 is missing at $v$.

It remains to address the case where the second endpoint $v$ of $H'$ is $w'$ or $x'$, where $w'_\circ w' w'_{\plus} \cdots x' x'_\circ$ is a part of $H$. Then, by Observation \ref{obs:pathbetween}, there is a maximal $(1,2)$- or $(3,4)$-alternating path on $C$ that is edge disjoint from $w'w_{\plus}\cdots x'$. We can use this path to apply our argument for $w'_\circ w' w'_{\plus} \cdots x' x'_\circ$. Since no edge on $w_\circ w w_{\plus} \cdots x x_\circ$ is recoloured during this procedure, this reduces the problem to the case above. This completes the $t\geq1$ inductive step, and hence completes the proof.
\end{proof}

\end{section}

\begin{section}{Multigraphs}

While Vizing's Theorem classifies all graphs as either class 1 or class 2, \emph{multigraphs} (i.e. graphs with multiple edges) can have chromatic index much larger than $\Delta+1$. The main results of this paper do extend to multigraphs however, and in fact, the proofs simplify greatly when multiple edges exist.

\setcounter{theorem}{10}
\begin{theorem}\label{thm:multigraphs}
  Let $G$ be a multigraph with maximum degree $\Delta$.
  \begin{enumerate}[label=\textup{(\alph*)}]
  \item\label{subthm:multigraphs:Delta<=3}
    If $\Delta \leq 3$, then all $(\Delta+1)$-edge-colourings of $G$ are Kempe equivalent.
  \item\label{subthm:multigraphs:Delta=4}
    If $\Delta=4$, then all 6-edge-colourings of $G$ are Kempe equivalent.
  \end{enumerate}
\end{theorem}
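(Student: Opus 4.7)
The plan is to prove parts (a) and (b) jointly by induction on $|E(G)|$. When $G$ is simple, part (a) is immediate from Theorem \ref{thm:subcubic} and part (b) from Theorem \ref{thm:subquartic}. Otherwise, fix a pair of parallel edges $e_1, e_2$ joining some vertices $u, v$, and set $G' := G - e_2$, so that $|E(G')| < |E(G)|$ and $\Delta(G') \leq \Delta(G)$. Given two $k$-edge-colourings $\phi, \psi$ of $G$ (with $k = \Delta(G)+1$ for (a), $k = 6$ for (b)), the inductive hypothesis applied to $G'$---together with Lemma \ref{lem:deltaplusone} to promote $(\Delta(G')+1)$-edge-colourings of $G'$ to $k$-edge-colourings when $\Delta(G') < \Delta(G)$---gives $\phi|_{G'} \sim_k \psi|_{G'}$ in $G'$.

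The crucial new ingredient is the observation, unavailable in the simple-graph setting, that the pair $\{e_1, e_2\}$ always constitutes a maximal alternating 2-cycle under the two colours it currently receives; consequently, swapping those two colours on $\{e_1, e_2\}$ is a valid single Kempe change in $G$. I then lift the prescribed sequence of Kempe changes from $G'$ to $G$ one exchange at a time. An $(a,b)$-exchange on a $G'$-component $H'$ extends verbatim to $G$ unless the current colour of $e_2$ lies in $\{a,b\}$ and $H'$ reaches $u$ or $v$---in which case the ambient $(a,b)$-component in $G$ strictly enlarges $H'$ by concatenation through $e_2$. In such a problematic step, I insert an auxiliary Kempe change beforehand to recolour $e_2$ out of $\{a,b\}$: either the 2-cycle swap on $\{e_1, e_2\}$, or a short Kempe change on a single edge using a colour missing at both $u$ and $v$. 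The hypothesis $k > \max\{\deg(u), \deg(v)\}$ guarantees that at least one such auxiliary manoeuvre succeeds.

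After all lifted Kempe changes have been executed, the resulting colouring $\phi^*$ of $G$ agrees with $\psi$ on every edge of $G'$. Setting $\beta := \psi(e_2)$, I note that $\beta$ is missing at both $u$ and $v$ under $\phi^*$: in $\psi$ the only edge of colour $\beta$ incident to $u$ or $v$ is $e_2$ itself, and $\phi^*$ matches $\psi$ on every other incident edge. Therefore the maximal $(\phi^*(e_2), \beta)$-alternating component containing $e_2$ consists of the single edge $e_2$, so one final Kempe change restores the colour of $e_2$ and completes the transformation $\phi \sim_k \psi$.

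The main obstacle is the lifting step: I must verify that at every stage the required auxiliary manoeuvre at $\{e_1, e_2\}$ is available and that the recolouring of $e_1$ caused by a 2-cycle swap does not derail the prescribed $G'$-sequence. The degree bounds at $u$ and $v$, combined with the slack $k \geq \Delta(G)+1$, leave enough unused colours at these vertices to effect the repair; this is exactly the source of the simplification that the paper alludes to when it notes that ``the proofs simplify greatly when multiple edges exist''.
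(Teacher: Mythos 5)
Your overall strategy coincides with the paper's: induct on $|E(G)|$, delete one edge $e_2$ of a parallel pair, lift the Kempe sequence from $G'=G-e_2$ back to $G$, repair the problematic exchanges by recolouring $e_2$ only, and finish with a single Kempe change on $e_2$ (your final step, and your explicit use of Lemma \ref{lem:deltaplusone} when $\Delta(G')<\Delta(G)$, are both fine --- indeed more careful than the paper on the latter point). The gap is in the repair step, which is the heart of the proof, and it is twofold. First, your ``crucial new ingredient'', the $2$-cycle swap on $\{e_1,e_2\}$, is not a usable repair: it recolours $e_1\in E(G')$, so it alters the colouring of $G'$ in the middle of the prescribed sequence; worse, if $e_2$ is coloured $a$ and $e_1$ is coloured $c\notin\{a,b\}$, the swap puts colour $a$ on $e_1$ and thereby splices $e_1$ (and whatever lies beyond $v$) \emph{into} the $(a,b)$-component of $G'$, so the prescribed exchange on $H'$ is no longer a valid Kempe change even in $G'$. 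You name this derailment as ``the main obstacle'' but never resolve it; the tool should simply be discarded.

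Second, your fallback --- recolour $e_2$ with a colour missing at both $u$ and $v$ --- is exactly the paper's move, but your stated justification, $k>\max\{\deg(u),\deg(v)\}$, only produces \emph{some} missing colour at each of $u$ and $v$ separately, not a \emph{common} missing colour outside $\{a,b\}$, which is what the repair requires. The missing argument is this: in the problematic configuration the $(a,b)$-component occupies two edges at $u$ ($e_2$ coloured $a$ and the last edge of $H'$ coloured $b$) and two at $v$ ($e_2$ and the continuation edge), and $e_1$ cannot lie on this component (that would force three component-edges at an endpoint of $e_2$), so $e_1$ carries a colour $c\notin\{a,b\}$ present at both $u$ and $v$. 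For $\Delta\le 3$ and $k=4$ this accounts for all three edges at each of $u$ and $v$, so the unique remaining colour is missing at both; for $\Delta=4$ and $k=6$ each of $u,v$ is missing at least two of the three colours outside $\{a,b,c\}$, and two $2$-subsets of a $3$-set intersect. Without this counting (which is precisely where the hypotheses $k=\Delta+1$ for $\Delta\le3$ versus $k=\Delta+2$ for $\Delta=4$ enter), the proof is incomplete; with it, your argument becomes the paper's.
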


\begin{proof}
We prove both \ref{subthm:multigraphs:Delta<=3} and \ref{subthm:multigraphs:Delta=4} by induction on $|E(G)|$. If $G$ has no multiple edges then the result follows from Theorems \ref{thm:subquartic} and \ref{thm:subcubic}. Hence we may assume that there exist edges $e,f \in E(G)$, and both $e$ and $f$ have endpoints $x$ and $y$. Let $\phi$ and $\psi$ be $k$-edge-colourings of $G$, where $k=\Delta+1$ if $\Delta\leq 3$ and $k=6$ if $\Delta=4$. Let $G':=G\setminus e$, and observe that by induction we get $\phi |_{G'} \sim_{k} \psi |_{G'}$. We claim that this series of $K$-changes in $G'$ extends to a series of $K$-changes in $G$. Note that this claim is sufficient to complete the proof, as it implies that there is indeed a series of $K$-changes which can be performed on $\phi$ so that it agrees with $\psi$ on all edges of $G'$. Since $e$ is just a path, we may then apply Lemma \ref{lem:path} to get complete agreement of the colourings.

Suppose there is a maximal $(1,2)$-alternating path which ends at $x$ in some intermediate colouring $\varphi$ of $G'$, but which includes the edge $e$ and at least one other edge incident to $y$ in $G$. Note that $f$ cannot belong to this path. Suppose that $\varphi(f)=3$. If $\Delta=3$, we see that colour 4 is missing at both $x$ and $y$ in $\varphi$, and hence we may break the $(1,2)$-path by recolouring $e$ with $4$.  On the other hand, if $\Delta=4$, then at least two of the colours $4,5,6$ are missing at $x$, and at least two of these colours are missing at $y$. Hence $x$ and $y$ again have a common missing colour, and this can again be used to recolour $e$ and hence break the $(1,2)$-path. This completes the proof.
\end{proof}

\end{section}

\end{document}